\documentclass[a4paper,11pt]{article}
\usepackage{amssymb}
\usepackage{amsmath}
\usepackage{amsthm}
\usepackage{listings}
\usepackage{amsthm}
\usepackage{bbm}
\numberwithin{equation}{section}
\usepackage{slashed}
\usepackage[font=footnotesize]{caption}
\usepackage[english,german]{babel}
\usepackage[utf8]{inputenc}
\usepackage{fancybox}
\usepackage{a4wide}
\usepackage{graphicx}
\usepackage{fancyhdr} 
\usepackage{hyperref}
\usepackage{mathrsfs}
\usepackage{subfigure}
\usepackage{extarrows}
\usepackage{lipsum}
\usepackage{ulem}
\usepackage{xcolor}
\setcounter{section}{-1}
\hypersetup{colorlinks=true,
	linkcolor={blue!90!black},
    	citecolor={blue!50!black},
    	urlcolor={blue!80!black}
   	 }

\chead{}	
\rhead{}	

\newtheorem{theorem}{Theorem}[section]
\newtheorem{lemma}[theorem]{Lemma}
\newtheorem{proposition}[theorem]{Proposition}
\newtheorem{corollary}[theorem]{Corollary}

\theoremstyle{definition}

\newtheorem{assumption}[theorem]{Remark}

\theoremstyle{remark}

\newcommand{\pM}{\mathbb{P}}

\definecolor{darkred}{rgb}{0.5,0.0,0.1} 
\definecolor{darkgreen}{rgb}{0,0.4,0} 
\definecolor{darkblue}{rgb}{0,0,0.6} 

\definecolor{yellow}{rgb}{0.9,0.3,0}
\definecolor{dgreen}{rgb}{0,0.4,0}
\definecolor{dblue}{rgb}{0,0,0.6}
\definecolor{dred}{rgb}{0.8,0.0,0.1}
\definecolor{dgold}{rgb}{0.5,0.3,0.0}
\definecolor{dvio}{rgb}{0.6,0.3,0.5}
\definecolor{gray}{rgb}{0.5,0.5,0.5}
\definecolor{dbraun}{rgb}{.4,0.0,0.0}

\begin{document}
\thispagestyle{empty}
\large
 \begin{center}
\textbf{DISCONNECTION BY LEVEL SETS OF THE DISCRETE GAUSSIAN FREE FIELD AND ENTROPIC REPULSION}\\ 

\hspace{10pt}

% Author names and affiliations
\large
Maximilian Nitzschner\\

\hspace{10pt}

\hspace{15pt}
 \\ \textbf{ }
 \\  \textbf{ }
 \\
\textbf{Abstract}
\end{center}
\normalsize

 We derive asymptotic upper and lower bounds on the large deviation probability that the level set of the Gaussian free field on $\mathbb{Z}^d$, $d \geq 3$, below a level $\alpha$ disconnects the discrete blow-up of a compact set $A$ from the boundary of the discrete blow-up of a box that contains $A$, when the level set of the Gaussian free field above $\alpha$ is in a strongly percolative regime. These bounds substantially strengthen the results of \cite{S15}, where $A$ was a box and the convexity of $A$ played an important role in the proof. We also derive an asymptotic upper bound on the probability that the average of the Gaussian free field well inside the discrete blow-up of $A$ is above a certain level when disconnection occurs. 
 
 The derivation of the upper bounds uses the solidification estimates for porous interfaces that were derived in the work \cite{NS17} of A.-S. Sznitman and the author to treat a similar disconnection problem for the vacant set of random interlacements.
If certain critical levels for the Gaussian free field coincide, an open question at the moment, the asymptotic upper and lower bounds that we obtain for the disconnection probability match in principal order, and conditioning on disconnection lowers the average of the Gaussian free field well inside the discrete blow-up of $A$, which can be understood as entropic repulsion.
\\
\\
\\
\\
\\ \\
\\
\\
\\
\\
\\
\\
\\
\\
\\
Departement Mathematik \\
ETH Zürich \\
Rämistrasse 101 \\
8092 Zürich, Switzerland
\newpage \textbf{ }
\thispagestyle{empty} \newpage
\setcounter{page}{1}
\section{Introduction}
 \selectlanguage{english}
Level sets of the Gaussian free field on $\mathbb{Z}^d$, $d \geq 3$, provide an example of a percolation model with long-range dependence. Its study goes back at least to the eighties, see \cite{BLM87}, \cite{LS86}, \cite{MS83}, and it has attracted considerable attention recently, see for instance  \cite{G04}, \cite{PR15}, \cite{RS13}, \cite{S15} and \cite{DPR17}. It is well-known that the model undergoes a phase transition at some critical level $h_\ast(d)$, which is both finite (see \cite{RS13}) and strictly positive (as established recently in \cite{DPR17}) in all dimensions $d \geq 3$. \\

Remarkably, some results for the level-set percolation of the Gaussian free field have been obtained by applying methods from the study of random interlacements, a model of random subsets of $\mathbb{Z}^d$ introduced in \cite{S10} with similar percolative properties. In the present work, we obtain large deviation upper and lower bounds on the probability that the discrete blow-up of a compact set $A \subseteq \mathbb{R}^d$ gets disconnected from the boundary of the discrete blow-up of a box containing $A$ in its interior, by the level set of a Gaussian free field below a level $\alpha$, when the level set \textit{above} $\alpha$ is in a strongly percolative regime.  Disconnection problems of this type have been studied for the vacant set of random interlacements in \cite{LS13} (concerning lower bounds) and more recently in \cite{NS17} (concerning upper bounds), improving on an earlier result from \cite{S17}. The results of this article substantially generalize the findings of \cite{S15}, where large deviation upper and lower bounds on the disconnection probability described above were derived for the case where $A$ was itself a box in $\mathbb{R}^d$, and the convexity of $A$ played an important role in the proof. \\

 While the generalization of the lower bound is straightforward, the proof of the upper bound requires an improved coarse-graining procedure to handle situations in which $A$ is not convex. Specifically, we make use of the solidification estimates for porous interfaces of \cite{NS17}, where a uniform lower bound on the capacity of 'porous deformations' of arbitrary compact sets with positive capacity is given. It is plausible, but open at the moment, that the asymptotic upper and lower bounds we obtain actually match. This would result from showing the equality of $h_{\ast}$ and two other critical levels $\overline{h}$ and $h_{\ast\ast}$ that we introduce below. We also consider the event that the average of the Gaussian free field well inside the discrete blow-up of $A$ is above $-(\overline{h}-\alpha) + \Delta$ for some $\Delta > 0$ and obtain an asymptotic large deviation upper bound on the probability of the intersection of this event with the disconnection event described above. If the three critical levels $\overline{h},h_\ast$ and $h_{\ast\ast}$ match, our bounds underpin a downward shift of this average that can be understood as \textit{entropic repulsion}.
\\ 

We will now describe the model and our results in more detail. Consider $\mathbb{Z}^d$, $d \geq 3$, and let $\pM$ be the law on $\mathbb{R}^{\mathbb{Z}^d}$ under which the canonical process $\varphi = (\varphi_x)_{x \in \mathbb{Z}^d}$ is a Gaussian free field, that is 
\begin{equation}
\label{eq:GFFDef}
\begin{split}
& \text{under }\pM, \ \varphi \text{ is a centered Gaussian field on }\mathbb{Z}^d \text{ with }  \\
&\mathbb{E}[\varphi_x \varphi_y] = g(x,y) \text{ for all }x,y \in \mathbb{Z}^d,
\end{split}
\end{equation}
 where $g(\cdot,\cdot)$ denotes the Green function of the simple random walk, defined below in \eqref{eq:GreenFunction}. For $\alpha \in \mathbb{R}$, we define the level set (or excursion set) above $\alpha$ as 
 \begin{equation}
 E^{\geq \alpha} = \left\lbrace x \in \mathbb{Z}^d ; \varphi_x \geq \alpha \right\rbrace,
 \end{equation}
 which is a random subset of $\mathbb{Z}^d$. For any $d \geq 3$, one defines a critical level $h_\ast = h_\ast(d) \in (0,\infty)$ as 
 \begin{equation}
 h_\ast(d) = \inf \left\lbrace \alpha \in \mathbb{R} ; \pM\left[ 0 \stackrel{\geq \alpha}{\longleftrightarrow}\infty \right] = 0 \right\rbrace,
 \end{equation}
 with $\left\lbrace x \stackrel{\geq \alpha}{\longleftrightarrow}\infty  \right\rbrace$ denoting the event that $x \in \mathbb{Z}^d$ belongs to an infinite connected component of $E^{\geq \alpha}$. To study disconnection problems, one can introduce a second critical parameter $h_{\ast\ast} = h_{\ast\ast}(d) (\geq h_\ast)$ via
 \begin{equation}
 \label{eq:h_astast}
 h_{\ast\ast}(d) = \inf \left\lbrace \alpha \in \mathbb{R} ; \liminf_L \pM\left[ B_L \stackrel{\geq \alpha}{\longleftrightarrow} \partial B_{2L} \right] = 0 \right\rbrace,
 \end{equation}
 where $B_L$ is the closed sup-norm ball in $\mathbb{Z}^d$ with radius $L$, centered at the origin, $\partial B_{2L}$ is the (external) boundary of $B_{2L}$, and $\left\lbrace  B_L \stackrel{\geq \alpha}{\longleftrightarrow} \partial B_{2L}  \right\rbrace$ stands for the event that there is a nearest-neighbor path in the excursion set $E^{\geq \alpha}$ connecting $B_L$ and $\partial B_{2L}$. One can show (see \cite{PR15}, \cite{RS13}) that $h_{\ast\ast} < \infty$ for all $d \geq 3$ and that for $\alpha > h_{\ast\ast}$ the connectivity function $\pM\left[x\stackrel{\geq \alpha}{\longleftrightarrow}y \right]$ (with hopefully obvious notation) has a stretched exponential decay in $|x-y|$, the Euclidean distance between $x, y \in \mathbb{Z}^d$, which actually, is an exponential decay when $d \geq 4$. The range $\alpha>  h_{\ast\ast}$ is called the strongly non-percolative regime, and it is conjectured that in fact $h_\ast = h_{\ast\ast}$. Recently developed techniques stemming from the application of the theory of randomized algorithms to percolation models (see \cite{D-CRT17}) may provide a step towards a proof of this conjecture. \\ 
 
 Consider a non-empty compact set $A \subseteq \mathbb{R}^d$ contained in the interior of a box of side-length $2M$, $M > 0$, centered at the origin, which we call $B_{\mathbb{R}^d}(0,M)$, i.e. 
 \begin{equation}
 \label{eq:ContainedInBall}
 A \subseteq \mathring{B}_{\mathbb{R}^d}(0,M).
 \end{equation}
 We define the discrete blow-up of $A$ and the boundary of the discrete blow-up of the box that encloses $A$ as
 \begin{equation}
 \label{eq:BlowUps}
 A_N = (NA) \cap \mathbb{Z}^d, \qquad S_N = \lbrace x \in \mathbb{Z}^d ; |x|_\infty = [MN]\rbrace,
 \end{equation}
 where we denote by $|\cdot |_\infty$ the sup-norm and by $[\cdot]$ the integer part of a real number. We are interested in the large-$N$ behavior of the disconnection event
 \begin{equation}
 \label{eq:DisconnectionEvent}
 \mathcal{D}^\alpha_N = \left\lbrace A_N \stackrel{\geq \alpha}{\nleftrightarrow} S_N \right\rbrace,
 \end{equation}
 corresponding to the absence of a nearest-neighbor path in $E^{\geq \alpha}$ starting in $A_N$ and ending in $S_N$. Our main motivation is to investigate the large $N$ behavior of $\pM[\mathcal{D}^\alpha_N]$ and of the conditional probability measure $\pM[\ \cdot \ | \mathcal{D}^\alpha_N]$. \\
 
 As far as lower bounds on $\pM[\mathcal{D}^\alpha_N]$ are concerned, we show in Theorem \ref{Theorem21} that for $\alpha \leq h_{\ast\ast}$, 
 \begin{equation}
 \label{eq:LowerBound}
 \liminf_N \frac{1}{N^{d-2}} \log \pM[\mathcal{D}^\alpha_N] \geq - \frac{1}{2d} (h_{\ast\ast}-\alpha)^2\text{cap}(A),
 \end{equation}
 with $\text{cap}(\cdot)$ denoting the Brownian capacity, see e.g. p.58 of \cite{PS78}. The proof uses the change of probability method and is a straightforward generalization of the proof of Theorem 2.1 of \cite{S15}. We mention at this point that a result similar to \eqref{eq:LowerBound} holds for the vacant set of random interlacements below a certain level, which was derived in \cite{LS13} using so-called 'tilted interlacements'. \\ 
 
 Let us now turn to the (more delicate) upper bounds. One introduces another critical level $\overline{h}$ such that the region $\alpha < \overline{h}$ corresponds to a strongly percolative regime for $E^{\geq \alpha}$, see (5.3) of \cite{S15} for its precise definition. It can be shown that $\overline{h}\leq h_\ast (\leq h_{\ast\ast})$, see Remark 5.1 of \cite{S15}. Apart from $\overline{h} > -\infty$, not much is known about $\overline{h}$, not even that $\overline{h}\geq 0$. It is plausible, but presently open that $\overline{h} = h_\ast = h_{\ast\ast}$.
 For $\alpha < \overline{h}$, as shown in Theorem \ref{Theorem31}, 
  \begin{equation}
 \label{eq:UpperBound}
 \limsup_N \frac{1}{N^{d-2}} \log \pM[\mathcal{D}^\alpha_N] \leq - \frac{1}{2d} (\overline{h}-\alpha)^2\text{cap}(\mathring{A}),
 \end{equation}
 where $\mathring{A}$ denotes the interior of $A$. If $\overline{h} = h_\ast = h_{\ast\ast}$ holds, then \eqref{eq:LowerBound} and \eqref{eq:UpperBound} yield in the percolative case $\alpha < h_\ast$ asymptotic upper and lower bounds for $\pM[\mathcal{D}^\alpha_N]$ that match in principal order, if $A$ is regular in the sense that $\text{cap}(A) = \text{cap}(\mathring{A})$. \\

As mentioned before, \eqref{eq:UpperBound} is a genuine generalization of Theorem 5.5 of \cite{S15} to non-convex sets $A$. It also has an analogue in the theory of random interlacements, see Theorem 4.1 of \cite{NS17}. \\

In addition to the upper bound \eqref{eq:UpperBound}, 
we show in Theorem \ref{Theorem43} that for any non-empty open subset $V$ of $\mathring{A}$ with $\overline{V} \subseteq \mathring{A}$ and any $\beta > 0$, one has in the region $\alpha < \overline{h}$:
 \begin{equation}
 \label{eq:DisconnectionAndPositive}
\begin{split}
\limsup_N & \frac{1}{N^{d-2}}\log\pM\bigg[ \bigg\lbrace \frac{1}{|V_N|} \sum_{x \in V_N} \varphi_x \geq -(\overline{h}-\alpha) + \Delta \bigg\rbrace \cap \mathcal{D}^\alpha_N \bigg] \\ 
& \qquad\leq - \frac{1}{2d}\left( \overline{h} - \alpha + \beta \Delta\right)^2 \frac{\text{\normalfont{cap}}(\mathring{A})}{1+ \beta^2 c_6(V,M)},
\end{split}
 \end{equation} 
 where $V_N = (NV) \cap \mathbb{Z}^d$, $|\cdot|$ denotes the cardinality of a subset of $\mathbb{Z}^d$, and $c_6(V,M)$ is an explicit constant depending only on $d$, $V$ and $M > 0$, see \eqref{eq:BoundZf3}. Importantly, $\beta > 0$ can be chosen small enough such that the right-hand side of \eqref{eq:DisconnectionAndPositive} is strictly smaller than the the right-hand side of \eqref{eq:UpperBound}. If $\overline{h} = h_\ast = h_{\ast\ast}$ and $A$ is regular in the sense that $\text{cap}(A) = \text{cap}(\mathring{A})$, one can infer from this and the disconnection lower bound \eqref{eq:LowerBound} a behavior reminiscent of \textit{entropic repulsion}: for any non-empty open $V$ with $\overline{V} \subseteq \mathring{A}$, conditionally on disconnection, the average of the Gaussian free field over $V_N$ experiences with high probability a push down by an amount $-(h_\ast - \alpha)$, in the sense that:
 \begin{equation}
 \label{eq:EntropicRep}
 \lim_N \pM\left[ \frac{1}{|V_N|}\sum_{x \in V_N} \varphi_x  < -(h_\ast - \alpha) + \Delta \bigg\vert \mathcal{D}^\alpha_N \right] = 1
\end{equation}  
for any $\Delta > 0$ and $\alpha < h_\ast$, see Corollary \ref{LastCor}. \\ \\
Classically, entropic repulsion results for the discrete Gaussian free field involve conditioning on the event that $\varphi$ is non-negative over a subset of $\mathbb{Z}^d$ that can be written as the discrete blow-up of a set in $\mathbb{R}^d$. For instance, if $D_N = (ND) \cap \mathbb{Z}^d$ with $D$ a box in $\mathbb{R}^d$, it is known that conditionally on $\lbrace \varphi_x \geq 0 \text{ for all }x \in D_N \rbrace$, the average of the Gaussian free field over $D_N$ is repelled to a level $\sqrt{4g(0,0)\log N}$ for large $N$, see e.g. Theorem 3.1, (3) of \cite{Giacomin}. In contrast to that, it seems that conditioning on disconnection lowers the average of the Gaussian free field inside $V_N$ by a constant value that depends on $\alpha$. \\

Let us now describe the structure of this article. In Section 1, we recall some basic notation and results concerning random walks, the Gaussian free field, the change of probability method, and we state the capacity lower bound of \cite{NS17} that will play a pivotal role in the proof of our main result, Theorem \ref{Theorem31}, as well as the solidification estimates of \cite{NS17}. In Section 2, we state and prove Theorem \ref{Theorem21}, which corresponds to the large deviation lower bound \eqref{eq:LowerBound}. In Section 3, we recall some facts about the percolative properties of the level sets of the Gaussian free field, mainly from \cite{S15}, and then proceed to state and prove Theorem \ref{Theorem31}, which corresponds to \eqref{eq:UpperBound}. Finally, in Section 4, we prove the asymptotic upper bound \eqref{eq:DisconnectionAndPositive}, see Theorem \ref{Theorem43}, and show in Corollary \ref{LastCor} that under the assumption $\overline{h} = h_\ast = h_{\ast\ast}$, \eqref{eq:EntropicRep} holds true for any $\Delta > 0$. \\

Finally, our convention on constants is that $c,c',...$ denote generic positive constants that may change from place to place, and depend only on the dimension $d$. Numbered constants ($c_1,c_2,...$) refer to the value assigned to them the first time they appear in the text. We indicate dependence on additional parameters in the notation. 
\section{Notation and useful results}
In this section we introduce further notation and review some results concerning random walks, potential theory, the discrete Gaussian free field and the notion of relative entropy. We will also include the solidification estimates and a related lower bound on the Brownian capacity of porous interfaces from \cite{NS17}. The latter will play a crucial role in the derivation of the large deviation upper bound \eqref{eq:UpperBound} in Section 3. Throughout the entire article, we tacitly assume that $d \geq 3$.\\

 We begin by introducing some notation. For a real number $s$, we write $[s]$ for its integer part. We consider on $\mathbb{R}^d$ the Euclidean and $\ell^\infty$-norms $|\cdot|$ and $|\cdot|_\infty$ and denote for $r \geq 0$ by $B(x,r) = \lbrace y \in \mathbb{Z}^d; |x-y|_\infty \leq r \rbrace$ the closed $\ell^\infty$-ball of radius $r$ in $\mathbb{Z}^d$, centered at $x \in \mathbb{Z}^d$. For subsets $G,H \subseteq \mathbb{R}^d$, we denote by $d(G,H)$ their mutual $\ell^\infty$-distance, i.e. $d(G,H) = \inf \lbrace |x-y|_\infty ; x \in G , y \in H \rbrace$ and write for simplicity $d(x,G)$ instead of $d(\lbrace x \rbrace, G)$ for $x \in \mathbb{R}^d$. For $K \subseteq \mathbb{Z}^d$, we let $|K|$ denote the cardinality of $K$ and we define the boundary $\partial K = \lbrace y \in \mathbb{Z}^d \setminus K ; \exists x \in K : |y-x| = 1 \rbrace$ and the internal boundary $\partial_i  K = \lbrace x\in K ; \exists y \in \mathbb{Z}^d \setminus K : |x-y|=1 \rbrace$. If $x,y \in \mathbb{Z}^d$ fulfill $|x - y| = 1$, we call them neighbors and write $x \sim y$. We call $\pi : \lbrace 0,..., N \rbrace \rightarrow \mathbb{Z}^d$ a nearest neighbor path (of length $N+1 \geq 2$) if $\pi(i) \sim \pi(i+1)$ for all $0 \leq i \leq N-1$. For subsets $K,K',U \subseteq \mathbb{Z}^d$, we write $K \stackrel{U}{\leftrightarrow} K'$ (resp. $K \stackrel{U}{\nleftrightarrow} K'$) if there is a path $\pi$ with values in $U$ starting in $K$ and ending in $K'$ (resp. if there is no such path) and we say that $K$ and $K'$ are connected in $U$ (resp. not connected in $U$).\\

We will now state some well-known facts about the discrete time simple random walk on $\mathbb{Z}^d$. To this end, let $(X_n)_{n \geq 0}$ be the canonical process on $(\mathbb{Z}^d)^{\mathbb{N}}$ and $P_x$ the canonical law of a simple random walk on $\mathbb{Z}^d$ started at $x \in \mathbb{Z}^d$. We denote the corresponding expectations by $E_x$ and define the Green function $g(\cdot,\cdot)$ of the walk,
\begin{equation}
\label{eq:GreenFunction}
g(x,y) = \sum_{n \geq 0} P_x[X_n = y], \text{ for } x,y \in \mathbb{Z}^d,
\end{equation}
which is finite (due to transience), symmetric, and fulfills $g(x,y) = g(x-y,0) \stackrel{\text{def}}{=} g(x-y)$. \\It is known that (see e.g. Theorem 5.4, p.31 of \cite{L91}) 
\begin{equation}
g(x) \sim \frac{C_d}{|x|^{d-2}}, \qquad \text{as } |x| \rightarrow \infty, \text{ with }C_d = \frac{d}{2\pi^{\frac{d}{2}}}\Gamma\left(\frac{d}{2}-1 \right).
\end{equation}
Given $K \subseteq \mathbb{Z}^d$, we also introduce stopping times (with respect to the canonical filtration generated by $(X_n)_{n \geq 0})$ $H_K = \inf \lbrace n \geq 0; X_n \in K \rbrace$, $\widetilde{H}_K = \inf \lbrace n \geq 1; X_n \in K \rbrace$, and $T_K = \inf \lbrace n \geq 0; X_n \notin K \rbrace$, the entrance, hitting and exit times of $K$. \\

We will now discuss some aspects of potential theory associated with the simple random walk. For a finite subset $K \subseteq \mathbb{Z}^d$, we denote by
\begin{equation}
e_K(x) = P_x[\widetilde{H}_K = \infty] \mathbbm{1}_K(x), \text{ for } x \in \mathbb{Z}^d,
\end{equation}
the equilibrium measure of $K$ and its total mass
\begin{equation}
\text{cap}_{\mathbb{Z}^d}(K) = \sum_{x \in K}e_K(x),
\end{equation}
the capacity of $K$. We also recall that for finite $K \subseteq \mathbb{Z}^d$, one has
\begin{equation}
\label{eq:EquilibriumPotential}
P_x[H_K < \infty] = \sum_{x' \in K} g(x,x') e_K(x'), \text{ for }x \in \mathbb{Z}^d,
\end{equation} 
see e.g.\ Theorem T1, p.300 of \cite{Sp13}.
For a box of size $L$, which we denote by $B_L = B(0,L)$, one knows that (see e.g. (2.16), p.53 of \cite{L91})
\begin{equation}
\label{eq:BoxCapBound}
cL^{d-2} \leq \text{cap}_{\mathbb{Z}^d}(B_L) \leq c' L^{d-2}, \text{ for } L \geq 1.
\end{equation}
An important quantity in potential theory is the Dirichlet form $\mathcal{E}_{\mathbb{Z}^d}(\cdot,\cdot)$, which is defined for finitely supported functions $f: \mathbb{Z}^d \rightarrow \mathbb{R}$ as 
\begin{equation}
\label{eq:DirichletForm}
\mathcal{E}_{\mathbb{Z}^d}(f,f) = \frac{1}{4d}\sum_{x \sim y}(f(x) - f(y))^2,
\end{equation}
and by polarization this definition can be extended to 
\begin{equation}
\mathcal{E}_{\mathbb{Z}^d}(f,g) = \frac{1}{4d} \sum_{x\sim y}(f(x) - f(y))(g(x)-g(y)),
\end{equation}
if at least one of the functions $f,g : \mathbb{Z}^d \rightarrow \mathbb{R}$ has finite support. The capacity and the Dirichlet form $\mathcal{E}_{\mathbb{Z}^d}(\cdot,\cdot)$ are related via $\text{cap}_{\mathbb{Z}^d}(K) = \inf_f \mathcal{E}_{\mathbb{Z}^d}(f,f)$, where $f$ runs over the set of finitely supported functions with a restriction to $K$ bigger or equal to $1$, see e.g. (2.10), p.18 of \cite{W00}. \\
We now turn to the Gaussian free field on $\mathbb{Z}^d$, $d \geq 3$. Recall the definitions of $(\varphi_x)_{x \in \mathbb{Z}^d}$ and $\pM$ from \eqref{eq:GFFDef}. For any finitely supported $f: \mathbb{Z}^d \rightarrow \mathbb{R}$, one has 
\begin{equation}
\label{eq:ExpectationDirichletForm}
\begin{split}
i) \ &\mathbb{E}[\mathcal{E}(f,\varphi)\varphi_x] = f(x), \text{ for } x \in \mathbb{Z}^d \\
ii) \ &\mathbb{E}[\mathcal{E}(f,\varphi)^2] = \mathcal{E}(f,f),
\end{split}
\end{equation}
see (1.14) of \cite{S15}. For $U \subseteq \mathbb{Z}^d$ finite, one furthermore defines the harmonic average $h^U$ of $\varphi$ in $U$ and the local field $\psi^U$, via 
\begin{equation}
\label{eq:HarmonicAverage}
h^U_x = E_x[\varphi_{X_{T_U}}] = \sum_{y \in \mathbb{Z}^d} P_x[X_{T_U} = y]\varphi_y, \text{ for } x \in \mathbb{Z}^d,
\end{equation}
\begin{equation}
\label{eq:LocalField}
\psi^U_x = \varphi_x - h^U_x, \text{ for } x\in \mathbb{Z}^d.
\end{equation}
Obviously, $\varphi_x = h^U_x + \psi^U_x$ and $\psi_x^U = 0$ if $x \in \mathbb{Z}^d \setminus U$, whereas $h^U_x = \varphi_x$ in that case. The 'domain Markov property' of the Gaussian free field asserts that
\begin{equation}
\begin{split}
&(\psi^U_x)_{x \in \mathbb{Z}^d} \text{ is independent of }\sigma(\varphi_y; y \in U^c) \text{ (in particular of }(h^U_x)_{x \in \mathbb{Z}^d}), \\
&\text{and is distributed as a centered Gaussian field with covariance } g_U(\cdot,\cdot),
\end{split}
\end{equation}
(where, $g_U(\cdot,\cdot)$ is the Green function of the random walk killed upon exiting $U$, see (1.3) of \cite{S15}). \\

Recall that for $h_{\ast\ast}$ from \eqref{eq:h_astast}, one has $0 < h_\ast \leq h_{\ast\ast} < \infty$. Above $h_{\ast\ast}$, the probability that there is a path in $E^{\geq \alpha}$ connecting $0$ to $\partial B_L$ has a stretched exponential decay in $L$: 
\begin{equation}
\label{eq:StretchedExponentialDecay}
\pM[0 \stackrel{\geq \alpha}{\leftrightarrow} \partial B_L] \leq c_1(\alpha) e^{-c_2(\alpha)L^{c_3}}, \text{ for } L \geq 0, \alpha > h_{\ast\ast},
\end{equation}
and in fact $c_3 = 1$ when $d \geq 4$ (and $c_3 < 1$ due to a logarithmic correction when $d = 3$), see \cite{PR15}. This property plays a pivotal role in the proof of the lower bound \eqref{eq:LowerBound} in the next section. \\

We will also need a classical inequality involving the relative entropy of two probability measures. For a probability measure $\widetilde{Q}$ absolutely continuous with respect to $Q$, the relative entropy of $\widetilde{Q}$ with respect to $Q$ is defined as 
\begin{equation}
H(\widetilde{Q}|Q) = E^{\widetilde{Q}}\left[\log \tfrac{d\widetilde{Q}}{dQ} \right] = E^Q\left[\tfrac{d\widetilde{Q}}{dQ} \log \tfrac{d\widetilde{Q}}{dQ} \right] \in [0, \infty],
\end{equation}
where $E^Q$ and $E^{\widetilde{Q}}$ denote the expectations associated to $Q$ and $\widetilde{Q}$, respectively. If $C$ is an event with $\widetilde{Q}[C] > 0$, one has 
\begin{equation}
\label{eq:EntropyBound}
Q[C] \geq \widetilde{Q}[C] \exp\left(- \frac{1}{\widetilde{Q}[C]}\left(H(\widetilde{Q}|Q) + \frac{1}{e}\right)\right),
\end{equation}
see e.g. p.76 of \cite{DS89}. \\

We conclude this section by recalling a uniform lower bound on the Brownian capacity of 'porous interfaces' surrounding a compact set $A \subseteq \mathbb{R}^d$, as well as an asymptotic lower bound on the trapping probability of Brownian motion by such porous interfaces, both from \cite{NS17}. Consider a non-empty bounded Borel subset $U_0$ of $\mathbb{R}^d$ with complement $U_1 = \mathbb{R}^d \setminus U_0$ and boundary $S= \partial U_0 = \partial U_1$. For $x \in \mathbb{R}^d$ and $l \in \mathbb{Z}$, let 
\begin{equation}
\widehat{\sigma}_l(x) = \frac{|B_{\mathbb{R}^d}(x,2^{-l})\cap U_1|}{|B_{\mathbb{R}^d}(x,2^{-l})|}
\end{equation}
 be the local density function associated to $U_1$, where $| \cdot |$ stands for the Lebesgue measure on $\mathbb{R}^d$ and $B_{\mathbb{R}^d}(x,r) = \lbrace y \in \mathbb{R}^d; |x-y|_\infty \leq r \rbrace$ for the closed ball around $x \in \mathbb{R}^d$ in $| \cdot |_\infty$-norm with radius $r \geq 0$. Moreover, for $l_\ast \geq 0$ and a non-empty compact subset $A$ of $\mathbb{R}^d$, introduce 
 \begin{equation}
 \begin{split}
 \mathcal{U}_{l_\ast, A} = \ &\text{the collection of bounded Borel subsets }U_0 \subseteq \mathbb{R}^d \text{ with } \\
 & \widehat{\sigma}_l(x) \leq \frac{1}{2} \text{ for all }x\in A \text{ and } l \geq l_\ast.
 \end{split}
\end{equation}  
Furthermore, denote by $W_z$ the Wiener measure started at $z \in \mathbb{R}^d$ under which the canonical process $(Z_t)_{t \geq 0}$ on $C(\mathbb{R}_+, \mathbb{R}^d)$ is a Brownian motion on $\mathbb{R}^d$, starting from $z \in \mathbb{R}^d$. Similarly as for the random walk, one can introduce stopping times (with respect to the canonical filtration generated by $(Z_t)_{t \geq 0}$) $H_B = \inf \lbrace s \geq 0; Z_s \in B \rbrace$ for $B \subseteq \mathbb{R}^d$ closed, the entrance time of Brownian motion into $B$, and $T_U = \inf \lbrace s \geq 0; Z_s \notin U \rbrace ( = H_{U^c})$ for $U \subseteq \mathbb{R}^d$ open, the exit time of Brownian motion from $U$. Moreover, one defines the first time when $Z$ moves at $|\cdot|_\infty$-distance $ r\geq 0$ from its starting point (which is also a stopping time), 
\begin{equation}
\tau_r = \inf \lbrace s \geq 0; |Z_s - Z_0|_\infty \geq r \rbrace.
\end{equation}
With these definitions at hand, we can introduce for a given non-empty bounded Borel subset $U_0 \subseteq \mathbb{R}^d$, $\varepsilon > 0$ and $\eta \in (0,1)$ the class of 'porous interfaces' 
\begin{equation}
\label{eq:ClassofporousInterf}
\begin{split}
\mathscr{S}_{U_0,\varepsilon,\eta} &= \ \text{the class of }\Sigma \subseteq \mathbb{R}^d \text{ compact with } W_z[H_\Sigma < \tau_\varepsilon] \geq \eta, \text{ for all } z \in \partial U_0.
\end{split}
\end{equation}
Informally, $\varepsilon$ and $\eta$ correspond to the distance from $S = \partial U_0$, at which the porous interface $\Sigma$ is 'felt' and the strength of its presence, respectively. With $\text{cap}(\cdot)$ denoting the Brownian capacity (as in the introduction), one has for any compact subset $A$ of $\mathbb{R}^d$ such that $\text{cap}(A) > 0$ and $\eta \in (0,1)$ the uniform capacity lower bound
\begin{equation}
\label{eq:SolidificationEstimate}
\lim_{u \rightarrow 0} \inf_{\varepsilon \leq u2^{-{l_\ast}}} \inf_{U_0 \in \mathcal{U}_{l_\ast,A}} \inf_{\Sigma \in \mathscr{S}_{U_0,\varepsilon,\eta}} \frac{\text{cap}(\Sigma)}{\text{cap}(A)} = 1,
\end{equation}
see (3.15) of Corollary 3.4 in \cite{NS17}. This result will function as a subsitute for a Wiener-type criterion if $A$ is not convex with $\Sigma$ being constructed from a configuration of 'blocking boxes' present when the disconnection event $\mathcal{D}^\alpha_N$ occurs. \\

Finally, we will need in Section 4 that with the same notation as above, one has the solidification estimate for porous interfaces:
\begin{equation}
\label{eq:SolidificationProb}
\lim_{u \rightarrow 0} \sup_{\varepsilon \leq u2^{-l_\ast}} \sup_{U_0 \in \mathcal{U}_{l_\ast,A}} \sup_{\Sigma \in \mathscr{S}_{U_0,\varepsilon,\eta}} \sup_{x\in A} W_x[H_\Sigma = \infty] = 0,
\end{equation}
see (3.3) of Theorem 3.1 in \cite{NS17}.
\section{Disconnection lower bound}
In this section we consider levels $\alpha \leq h_{\ast\ast}$ and give an asymptotic lower bound on the disconnection probability $\pM[\mathcal{D}^\alpha_N]$ for large $N$ (see \eqref{eq:DisconnectionEvent} for the definition of $\mathcal{D}^\alpha_N$). The proof is a straightforward extension of the one given in Section 2 of \cite{S15} in the case when $A$ was the box $[-1,1]^N$, and uses a technical result from \cite{LS13}, where the analogous statement in the case of disconnection by random interlacements was given. \\

Consider a non-empty compact subset $A$ of $\mathbb{R}^d$ that fulfills \eqref{eq:ContainedInBall} and recall the notation from \eqref{eq:BlowUps}.
\begin{theorem}
\label{Theorem21}
Assume that $\alpha \leq h_{\ast\ast}$, with $h_{\ast\ast}$ the critical level from \eqref{eq:h_astast}. Then one has
\begin{equation}
\liminf_N \frac{1}{N^{d-2}} \log \pM[\mathcal{D}^\alpha_N] \geq - \frac{1}{2d}(h_{\ast\ast} - \alpha)^2 \text{\normalfont{cap}}(A).
\end{equation}
\end{theorem}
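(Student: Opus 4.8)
The plan is to use the change-of-probability method, following the strategy of Theorem 2.1 of \cite{S15}, but replacing the explicit tilt adapted to a box by one adapted to a general compact set $A$. The guiding idea is that to force disconnection cheaply, one shifts the field downward by roughly $-(h_{\ast\ast}-\alpha)$ in a neighborhood of $A_N$, so that the level set $E^{\geq \alpha}$ near $A_N$ behaves like a level set above $h_{\ast\ast}$, which is strongly non-percolative by \eqref{eq:StretchedExponentialDecay}. Concretely, fix $\delta > 0$ small and a smooth compactly supported function on $\mathbb{R}^d$ that equals $1$ on a neighborhood of $A$ and is supported inside $\mathring{B}_{\mathbb{R}^d}(0,M)$; rescale it to $\mathbb{Z}^d$ to obtain $f_N$, and let the tilted measure $\widetilde{\pM}_N$ be the law of $\varphi - (h_{\ast\ast} - \alpha + \delta) f_N$ under $\pM$, equivalently the measure with density proportional to $\exp(-(h_{\ast\ast}-\alpha+\delta)\mathcal{E}(g_N,\varphi))$ for the appropriate $g_N$ related to $f_N$ via the discrete Laplacian. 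By \eqref{eq:ExpectationDirichletForm} the relative entropy is
\begin{equation}
\label{eq:EntropyEstimate}
H(\widetilde{\pM}_N | \pM) = \tfrac12 (h_{\ast\ast}-\alpha+\delta)^2 \,\mathcal{E}_{\mathbb{Z}^d}(f_N, f_N),
\end{equation}
and as $N \to \infty$ one has $N^{-(d-2)}\mathcal{E}_{\mathbb{Z}^d}(f_N,f_N) \to \tfrac{1}{2d}\int_{\mathbb{R}^d}|\nabla f|^2\,dx$ (the $\tfrac1{2d}$ reflecting the normalization in \eqref{eq:DirichletForm} versus the continuum Dirichlet energy). Optimizing the continuum function $f$ (equal to $1$ on $A$, vanishing at infinity) drives $\tfrac1{2d}\int|\nabla f|^2 \to \tfrac1{2d}\,\mathrm{cap}(A)$ up to the usual $2d$-type constant; keeping careful track of these constants is a bookkeeping point but is exactly the computation already done in \cite{S15} for the box, so I would cite it and only indicate the changes.

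Next I would show $\widetilde{\pM}_N[\mathcal{D}^\alpha_N] \to 1$. Under $\widetilde{\pM}_N$, on the region where $f_N \equiv 1$ the field is $\varphi - (h_{\ast\ast}-\alpha+\delta)$ with $\varphi$ a GFF, so $E^{\geq \alpha}$ there coincides in law with $E^{\geq h_{\ast\ast}+\delta}$ of an ordinary GFF. To disconnect $A_N$ from $S_N$ it suffices to find a "blocking shell" — a sufficiently thick dyadic annulus around $A_N$, still inside the region $\{f_N = 1\}$ — containing no crossing path of $E^{\geq \alpha}$. Covering such a shell by $O(N^{d-1})$ boxes of a large fixed size $L$ and applying the stretched-exponential bound \eqref{eq:StretchedExponentialDecay} at level $h_{\ast\ast}+\delta$ to each, a union bound gives that the probability of \emph{some} box being crossed is at most $cN^{d-1}\exp(-c'L^{c_3})$, which tends to $0$ once $L$ is chosen large; hence $\widetilde{\pM}_N[\mathcal{D}^\alpha_N] \to 1$. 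Here is where I would lean on the technical result from \cite{LS13}: constructing the blocking surface and controlling the geometry when $A$ is an arbitrary compact set (not a box) is handled there in the interlacements setting, and the argument transfers verbatim since it only concerns the combinatorics of paths and shells, not the specific field.

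Finally, plug everything into the entropy inequality \eqref{eq:EntropyBound} with $C = \mathcal{D}^\alpha_N$ and $\widetilde{Q} = \widetilde{\pM}_N$, $Q = \pM$:
\begin{equation}
\pM[\mathcal{D}^\alpha_N] \;\geq\; \widetilde{\pM}_N[\mathcal{D}^\alpha_N]\,\exp\!\left(-\tfrac{1}{\widetilde{\pM}_N[\mathcal{D}^\alpha_N]}\Big(H(\widetilde{\pM}_N|\pM) + \tfrac1e\Big)\right).
\end{equation}
Taking $\tfrac1{N^{d-2}}\log$, using $\widetilde{\pM}_N[\mathcal{D}^\alpha_N]\to 1$ and \eqref{eq:EntropyEstimate}, yields
\begin{equation}
\liminf_N \tfrac1{N^{d-2}}\log\pM[\mathcal{D}^\alpha_N] \;\geq\; -\tfrac{1}{2d}(h_{\ast\ast}-\alpha+\delta)^2\,\mathrm{cap}(A)
\end{equation}
for a suitable regularized capacity; letting the approximating functions $f$ tend to the equilibrium potential of $A$ and then $\delta \downarrow 0$ finishes the proof. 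I expect the only genuinely delicate point to be the interplay between the two limits — the need to take $N\to\infty$ before refining $f$ toward the equilibrium potential, and the requirement that the tilt region $\{f_N = 1\}$ be wide enough to host a blocking shell of the fixed width $L$ needed for \eqref{eq:StretchedExponentialDecay} to bite — but since $A \subseteq \mathring{B}_{\mathbb{R}^d}(0,M)$ there is room to spare, and this is precisely the mechanism already in place in \cite{S15}, so the generalization to non-convex $A$ is routine here (in contrast to the upper bound).
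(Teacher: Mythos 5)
Your overall strategy — the change-of-probability tilt, the entropy inequality \eqref{eq:EntropyBound}, the stretched exponential decay \eqref{eq:StretchedExponentialDecay}, and the convergence of the rescaled discrete Dirichlet form to the Brownian capacity — is essentially the same as the paper's proof. However, there is a genuine error in the step where you argue that $\widetilde{\pM}_N[\mathcal{D}^\alpha_N] \to 1$. You cover the blocking shell by $O(N^{d-1})$ boxes of a \emph{fixed} large size $L$ and union-bound to obtain $c N^{d-1} \exp(-c'L^{c_3})$, claiming this vanishes once $L$ is chosen large. It does not: with $L$ fixed and $N\to\infty$, the factor $\exp(-c'L^{c_3})$ is a constant, and $N^{d-1}$ times a constant diverges. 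The scale at which \eqref{eq:StretchedExponentialDecay} is invoked must grow linearly in $N$, which is exactly what the tilt region affords since the set where $f_N$ sits at its plateau value has diameter of order $N$. The paper implements this by bounding the connection probability under the tilted measure by $|A_N| \cdot \pM\bigl[0 \stackrel{\geq h_{\ast\ast}+\varepsilon}{\longleftrightarrow} \partial B_{[c'(\delta-\vartheta)N]}\bigr]$, so the stretched exponential is in $N$ and crushes the polynomial $|A_N|$.

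A secondary point: you cite \cite{LS13} for ``constructing the blocking surface and controlling the geometry when $A$ is an arbitrary compact set,'' but that part of the argument is an elementary union bound needing no input beyond \eqref{eq:StretchedExponentialDecay} and the fact that $|A_N|$ grows polynomially. What \cite{LS13} is actually needed for is precisely the Dirichlet-form bookkeeping you deferred: Proposition 2.4 there provides, for the discrete blow-ups $h_N$ of mollified equilibrium potentials of $\delta$-neighborhoods of $A$ relative to large balls $U$, the quantitative limit $\lim_{\delta\to 0}\lim_{r_U\to\infty}\lim_{\vartheta\to 0}\lim_N N^{-(d-2)}\mathcal{E}_{\mathbb{Z}^d}(h_N,h_N) = \tfrac{1}{d}\,\text{cap}(A)$, which pins down the constant $\tfrac{1}{2d}$ in the final bound without any handwaving about a ``$2d$-type constant.''
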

Before we start the proof of this Theorem, we state the auxiliary technical result alluded to above. 
 Recall the notation concerning Brownian motion above \eqref{eq:ClassofporousInterf}. For any $\eta \in (0,1)$, let $A^\eta$ be the closed $\eta$-neighborhood of $A$. Let $U$ be an open Euclidean ball in $\mathbb{R}^d$ with radius $r_U > 0$, centered at the origin, such that one has 
 \begin{equation}
 A^{2\delta} \subseteq \mathring{B}_{\mathbb{R}^d}(0,M) \subseteq U.
 \end{equation}
 We define the function $h$ from $\mathbb{R}^d$ to $[0,1]$ by 
 \begin{equation}
 h(z) = W_z[H_{A^{2\delta}} < T_U], \text{ for } z \in \mathbb{R}^d,
 \end{equation}
 which fulfills $h(z) = 1$ for $z \in A^{2\delta}$ ($h$ is the equilibrium potential of $A^{2\delta}$ relative to $U$). For $\vartheta \in (0, \delta)$, let $\phi^\vartheta$ be a smooth non-negative function with $\text{supp}\ \phi^\vartheta$ contained in the Euclidean ball in $\mathbb{R}^d$ with radius $\vartheta$, centered at the origin, and $\int \phi^\vartheta(z)dz = 1$, and write 
 \begin{equation}
 h^\vartheta = h \ast \phi^\vartheta
 \end{equation}
 for the convolution of $h$ and $\phi^\vartheta$. Note that $h^\vartheta = 1$ on $A^{2\delta - 2 \vartheta} \supseteq A$ by construction. Finally, we define the restriction to $\mathbb{Z}^d$ of the blow-up of $h$ as 
 \begin{equation}
 h_N(x) = h^\vartheta\left(\frac{x}{N}\right), \text{ for } x \in \mathbb{Z}^d.
 \end{equation}
 Then Proposition 2.4 of \cite{LS13} states that
 \begin{equation}
 \label{eq:AuxiliaryResultLS}
 \lim_{\delta \rightarrow 0} \lim_{r_U \rightarrow \infty} \lim_{\vartheta \rightarrow 0} \lim_{N \rightarrow \infty} \frac{1}{N^{d-2}} \mathcal{E}_{\mathbb{Z}^d}(h_N,h_N) = \frac{1}{d}\text{cap}(A),
 \end{equation}
 with $\mathcal{E}_{\mathbb{Z}^d}(\cdot,\cdot)$ the Dirichlet form introduced in \eqref{eq:DirichletForm}.
 \begin{proof}[Proof of Theorem \ref{Theorem21}]
 Consider for $f: \mathbb{Z}^d \rightarrow \mathbb{R}$ finitely supported the following 'tilted' probability measure on $\mathbb{R}^{\mathbb{Z}^d}$: 
 \begin{equation}
 \label{eq:TiltedPM}
 d\widetilde{\pM} = \exp\left\lbrace \mathcal{E}_{\mathbb{Z}^d}(f,\varphi) - \frac{1}{2}\mathcal{E}_{\mathbb{Z}^d}(f,f)\right\rbrace d\pM.
 \end{equation}
 By \eqref{eq:ExpectationDirichletForm} and Cameron-Martin's formula, $\widetilde{\pM}$ is indeed a probability measure and 
 \begin{equation}
 \label{eq:SameLaw}
 \begin{split}
 \varphi \text{ under }\widetilde{\pM}\text{ has the same law as }(\varphi_x + f(x))_{x \in \mathbb{Z}^d} \text{ under }\pM,
 \end{split}
 \end{equation}
 (see also (2.3) and (2.4) of \cite{S15}). We choose $\varepsilon > 0$, and define $\widetilde{\pM}_N$ as the probability measure associated to $f_N = -(h_{\ast\ast} - \alpha + \varepsilon)h_N$ (in place of $f$) in \eqref{eq:TiltedPM}. Using the entropy inequality \eqref{eq:EntropyBound}, we find that 
 \begin{equation}
 \label{eq:ApplicationOfEntrIneq}
 \pM[\mathcal{D}^\alpha_N] \geq \widetilde{\pM}_N[\mathcal{D}^\alpha_N]\exp\left(-\frac{1}{\widetilde{\pM}_N[\mathcal{D}^\alpha_N]} \left(\frac{1}{2}\mathcal{E}_{\mathbb{Z}^d}(f_N,f_N) + \frac{1}{e}\right) \right),
 \end{equation}
 since $H(\widetilde{\pM}_N|\pM) = \widetilde{\mathbb{E}}_N\left[ \mathcal{E}_{\mathbb{Z}^d}(f_N,\varphi)\right] - \frac{1}{2}\mathcal{E}_{\mathbb{Z}^d}(f_N,f_N) \stackrel{\eqref{eq:SameLaw}}{=} \frac{1}{2}\mathcal{E}_{\mathbb{Z}^d}(f_N,f_N)$, where we denote by $\widetilde{\mathbb{E}}_N$ the expectation with respect to $\widetilde{\pM}_N$. Theorem \ref{Theorem21} will follow once we show that 
 \begin{equation}
 \label{eq:MainStepLowerBound21}
 \lim_N \widetilde{\pM}_N[\mathcal{D}^\alpha_N] = 1.
 \end{equation}
 Following a similar reasoning as in (2.9) and (2.10) of \cite{S15}, we have that
 \begin{equation}
 \label{eq:Prove211}
 \begin{split}
 \widetilde{\pM}_N[\mathcal{D}^\alpha_N] & \stackrel{\eqref{eq:SameLaw}}{=} \pM[A_N \stackrel{\geq \alpha- f_N}{\nleftrightarrow} S_N]\geq \pM[A_N \stackrel{\geq h_{\ast\ast} + \varepsilon}{\nleftrightarrow} \partial_i (A^{2\delta - 2\vartheta})_N],
 \end{split}
 \end{equation}
 where we used in the last step that $f_N = -(h_{\ast\ast} - \alpha + \varepsilon)$ on $(NA^{2\delta-2\vartheta}) \cap \mathbb{Z}^d = (A^{2\delta - 2\vartheta})_N$. However, we can see now that 
 \begin{equation}
 \label{eq:UpperBoundProof}
 \begin{split}
  \pM[A_N 
 \stackrel{\geq h_{\ast\ast}+\varepsilon }{\longleftrightarrow} \partial_i(A^{2\delta - 2\vartheta})_N ] & = \pM\left[ \bigcup_{x \in A_N} \left\lbrace x \stackrel{\geq h_{\ast\ast}+\varepsilon }{\longleftrightarrow} \partial_i(A^{2\delta - 2\vartheta})_N \right\rbrace \right] \\
 & \leq |A_N| \pM\left[0 \stackrel{\geq h_{\ast\ast}+\varepsilon }{\longleftrightarrow} \partial B_{[c' (\delta- \vartheta)N ]} \right] \rightarrow 0 \qquad \text{as }N \rightarrow \infty, 
 \end{split}
 \end{equation}
 where we used \eqref{eq:StretchedExponentialDecay} to bound the probability in the second line as well as the fact that $|A_N|$ grows polynomially in $N$. The claim \eqref{eq:MainStepLowerBound21} follows directly from a combination of \eqref{eq:Prove211} and \eqref{eq:UpperBoundProof}. We now take the logarithm of \eqref{eq:ApplicationOfEntrIneq}, divide by $N^{d-2}$ and take the limit $N \rightarrow \infty$ to arrive at
 \begin{equation}
 \liminf_N \frac{1}{N^{d-2}} \log \pM[\mathcal{D}^\alpha_N] \geq - \frac{1}{2}(h_{\ast\ast} - \alpha + \varepsilon)^2 \lim_N \frac{1}{N^{d-2}} \mathcal{E}_{\mathbb{Z}^d}(h_N,h_N).
 \end{equation}
 The proof is concluded by taking the limits $\vartheta \rightarrow 0$, $r_U \rightarrow \infty$ and $\delta \rightarrow 0$, using \eqref{eq:AuxiliaryResultLS} and finally taking $\varepsilon \rightarrow 0$.
  \end{proof}
 \begin{assumption}
 \label{Remark22}
 1) Let $\left\lbrace A_N \stackrel{\geq \alpha}{\nleftrightarrow} \infty \right\rbrace$ be the event that there is no infinite path in $E^{\geq \alpha}$ starting in $A_N$. Clearly, this event contains $\mathcal{D}^\alpha_N$ and we obtain as an immediate consequence of Theorem \ref{Theorem21} that
 \begin{equation}
 \liminf_N \frac{1}{N^{d-2}} \log \pM\left[ A_N \stackrel{\geq \alpha}{\nleftrightarrow} \infty \right] \geq -\frac{1}{2d}(h_{\ast\ast} - \alpha)^2 \text{cap}(A).
\end{equation}
2) The result of Theorem \ref{Theorem21} can be generalized as follows: Instead of $\mathcal{D}^\alpha_N$, consider the events $\widehat{\mathcal{D}}^\alpha_N$ with $A_N$ in \eqref{eq:DisconnectionEvent} replaced by $\widehat{A}_N = (NA(\epsilon_N)) \cap \mathbb{Z}^d$ and some sequence $\epsilon_N \rightarrow 0$, where $A({\epsilon_N})$ denotes the closed $\epsilon_N$-neighborhood in $|\cdot|_\infty$-norm of $A$. In fact, we can for large $N$ modify \eqref{eq:Prove211} such that $\widetilde{\pM}_N[\widehat{\mathcal{D}}^\alpha_N] \geq \pM[\widehat{A}_N \stackrel{\geq h_{\ast\ast} + \varepsilon/2}{\nleftrightarrow} \partial_i (A(\epsilon_N)^{2\delta - 2\vartheta})_N]$, because $f_N = -(h_{\ast\ast}-\alpha + \varepsilon)$ on $(A^{2\delta-2\vartheta})_N$ and thus must be $ \leq  -(h_{\ast\ast}-\alpha + \varepsilon/2)$ on $(A(\epsilon_N))^{2\delta-2\vartheta}$ for $N$ large enough. The upper bound \eqref{eq:UpperBoundProof} thus continues to hold and we obtain
\begin{equation}
\liminf_N \frac{1}{N^{d-2}} \pM[\widehat{\mathcal{D}}^\alpha_N] \geq -\frac{1}{2d}(h_{\ast\ast} -\alpha)^2 \text{cap}(A).
\end{equation}
\hfill $\square$
 \end{assumption}
 \section{Disconnection upper bound}
 In this section, we proceed to prove the main result of this article, the asymptotic upper bound on the disconnection probability \eqref{eq:UpperBound}. The general strategy of the proof we give here is essentially derived from the proof of the analogous statement for disconnection by random interlacements in \cite{NS17}. The notion of 'good($\alpha,\beta,\gamma$)'-boxes (see (2.11)-(2.13) of \cite{S17}) is replaced by the notion of '$\psi$-good boxes at levels $\gamma,\delta$' from \cite{S15}, whereas the condition $N_u(D_z) \leq \beta \text{cap}_{\mathbb{Z}^d}(D_z)$ (see (2.14) of \cite{S17}) roughly corresponds to the notion of '$h$-good boxes at level $a$' from \cite{S15}. \\
 
Whereas the upper bound in Theorem 5.5 of \cite{S15} dealt with the case where $A$ was a box, we are able to treat here the case of a general compact set $A$ and do not require any assumption of convexity for $A$. Thus, we consider again a compact subset $A$ of $\mathbb{R}^d$ that fulfills \eqref{eq:ContainedInBall} and recall the notation from \eqref{eq:BlowUps}. \\
 
 The main result of this section is: 
 \begin{theorem}
 \label{Theorem31}
 Assume that $\alpha < \overline{h}$. Then one has 
 \begin{equation}
 \limsup_N \frac{1}{N^{d-2}} \log \pM[\mathcal{D}^\alpha_N] \leq - \frac{1}{2d}(\overline{h}-\alpha)^2\text{\normalfont{cap}}(\mathring{A}).
 \end{equation}
 Here, $\overline{h}$ is the critical value defined in (5.3) of \cite{S15}.
 \end{theorem}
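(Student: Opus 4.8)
\textbf{Proof strategy for Theorem \ref{Theorem31}.}
The plan is to adapt the coarse-graining scheme of \cite{NS17} for disconnection by random interlacements to the Gaussian free field, using the notions of ``$\psi$-good'' and ``$h$-good'' boxes from \cite{S15} in place of the interlacement-specific local conditions. First I would fix a scale $L = L_N$ (of order a small power of $N$, or $N$ divided by a large power of $2$ depending on the final optimization) and tile a neighborhood of $A_N$ by boxes of this side-length, together with their concentric doubled boxes. Following \cite{S15}, call a box $B$ \emph{$\psi$-good at levels $\gamma,\delta$} if the local field $\psi^{\,\cdot}$ is suitably bounded on it so that the excursion set of $\varphi$ above $\alpha$ contains, with overwhelming probability, a crossing cluster of the doubled box whenever the harmonic average $h^{\,\cdot}$ is not too negative; and call $B$ \emph{$h$-good at level $a$} if the harmonic average $h$ restricted to $B$ stays above $-a$. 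The key deterministic input, exactly as in \cite{NS17}, is that on the disconnection event $\mathcal{D}^\alpha_N$ there must exist a connected ``interface'' of boxes, separating $A_N$ from $S_N$, none of which is simultaneously $\psi$-good and $h$-good; since the $\psi$-goodness fails only with stretched-exponentially small probability per box (by the percolative estimates recalled from \cite{S15}, valid because $\alpha < \overline h$), the dominant contribution comes from configurations in which a macroscopic collection of boxes along such an interface fails to be $h$-good, i.e.\ carries an atypically negative harmonic average.

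The second, and more delicate, step is to convert this combinatorial/geometric picture into the capacity cost $\frac{1}{2d}(\overline h - \alpha)^2 \mathrm{cap}(\mathring A)$. Here I would discretize the possible interfaces: pass to a coarser dyadic scale $2^{-l_\ast}$, record for each admissible interface the corresponding union $\Sigma_N$ of $h$-bad boxes, and observe that the number of such coarse-grained interfaces is only $\exp(o(N^{d-2}))$, so a union bound over them is affordable. For a fixed interface, the harmonic average is a \emph{Gaussian} field, and the event that $h$ is below roughly $-(\overline h - \alpha)$ on a set of boxes forming $\Sigma_N$ is a lower-deviation event for a centered Gaussian, whose probability is controlled by $\exp\!\big(-\tfrac12 \inf \mathcal{E}_{\mathbb{Z}^d}(\cdot,\cdot)\big)$ over admissible ``shift'' functions — essentially a Gaussian large-deviation/change-of-measure estimate as in Section~5 of \cite{S15}. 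The infimum of the Dirichlet energy over functions that are $\le -(\overline h - \alpha)$ on the blown-up interface is, up to lower-order terms, $(\overline h - \alpha)^2 \,\mathrm{cap}_{\mathbb{Z}^d}(\Sigma_N)$, which after rescaling by $N^{d-2}$ converges to $\frac{1}{d}(\overline h - \alpha)^2\,\mathrm{cap}(\Sigma)$ for the continuous limiting interface $\Sigma$.

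The crucial point — and the reason the solidification estimate \eqref{eq:SolidificationEstimate} is needed — is the uniform lower bound $\mathrm{cap}(\Sigma) \ge \mathrm{cap}(\mathring A)(1 - o(1))$ over \emph{all} admissible porous interfaces $\Sigma$. Because $A$ need not be convex, an interface of $h$-bad boxes produced by disconnection need not surround $A$ as a nice hypersurface; it is only a ``porous'' object whose local density, seen from points of $\mathring A$, is at most $\tfrac12$ at all scales $\ge l_\ast$ — precisely the defining property of the class $\mathcal{U}_{l_\ast,A}$, with the porosity/trapping parameters $\varepsilon,\eta$ coming from the fact that a Brownian path started near $A$ must hit $\Sigma$ before escaping distance $\varepsilon$ with probability $\ge \eta$ (this is what a crossing cluster of $\psi$-good boxes would otherwise provide, hence its absence along $\Sigma$). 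Feeding $\mathring A$ (with $\mathrm{cap}(\mathring A)>0$; the case $\mathrm{cap}(\mathring A)=0$ being trivial) into \eqref{eq:SolidificationEstimate} gives $\mathrm{cap}(\Sigma)/\mathrm{cap}(\mathring A) \to 1$ uniformly, which is exactly the substitute for the Wiener-type criterion that convexity provided in \cite{S15}. Assembling the pieces — union bound over interfaces, Gaussian deviation estimate per interface, Dirichlet-energy/capacity identification, and the uniform solidification lower bound — then letting $l_\ast \to \infty$ and the auxiliary parameters ($\gamma,\delta,a$, the scale ratios) tend to their limits yields the claimed bound. I expect the main obstacle to be the bookkeeping that ensures the geometric object extracted from $\mathcal{D}^\alpha_N$ genuinely lands in the class $\mathscr{S}_{U_0,\varepsilon,\eta}$ with $U_0 \in \mathcal{U}_{l_\ast,A}$ for parameters in the regime where \eqref{eq:SolidificationEstimate} applies, and in simultaneously controlling the (small but nonzero) probability that $\psi$-goodness fails so that it does not overwhelm the $h$-deviation cost.
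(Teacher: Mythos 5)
Your proposal is correct and follows essentially the same route as the paper's proof of Theorem~\ref{Theorem31}: the coarse-graining scheme adapted from \cite{NS17} using $\psi$-good and $h$-good boxes from \cite{S15}, super-exponential suppression of $\psi$-bad columns, $\exp(o(N^{d-2}))$ combinatorial complexity for the extracted interfaces, a Gaussian deviation estimate per interface producing the capacity-weighted rate (the paper invokes Corollary~4.4 of \cite{S15}, a Borell--TIS bound, rather than your direct Dirichlet-energy/change-of-measure framing, but these yield the same asymptotic cost), and the solidification estimate \eqref{eq:SolidificationEstimate} to obtain the uniform lower bound $\text{cap}(\Sigma) \geq \text{cap}(\mathring A)(1-o(1))$. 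The obstacles you flag at the end (verifying membership in $\mathscr{S}_{U_0,\varepsilon,\eta}$ and controlling $\psi$-bad boxes) are precisely what the paper addresses via the insulation property \eqref{eq:InsulationProperty} and the super-exponential bound \eqref{eq:SuperExpCombined}.
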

 Before starting the proof of Theorem \ref{Theorem31}, we introduce further notation and recall some more results on the Gaussian free field from \cite{S15}. \\
 \\
 We consider some positive integers $L \geq 1$ and $K \geq 100$ and introduce the lattice $\mathbb{L} = L\mathbb{Z}^d$. We define for $z \in \mathbb{L}^d$
 \begin{equation}
 \label{eq:Boxes}
 \begin{split}
 B_z & = z + [0,L)^d\cap \mathbb{Z}^d \subseteq D_z = z + [-3L,4L)^d \cap \mathbb{Z}^d \\
 & \subseteq U_z = z + [-KL+1,KL-1)^d \cap \mathbb{Z}^d.
 \end{split}
 \end{equation}
 Let $\varphi = \psi_z + h_z$, where $h_z = h^{U_z}$ and $\psi_z = \psi^{U_z}$, be the decomposition of $\varphi$ into the harmonic average and the local field in $U_z$, see \eqref{eq:HarmonicAverage} and \eqref{eq:LocalField}. We refer to (5.7), (5.8) of \cite{S15} for the precise definition of a $\psi$-good box $B_z$ at levels $\gamma > \delta$ (with $\delta < \gamma < \overline{h}$), which is otherwise called $\psi$-bad at levels $\gamma > \delta$.  Being $\psi$-good at levels $\gamma > \delta$ in essence means that the box $B_z$ fulfills two properties: The set $B_z \cap \lbrace x \in \mathbb{Z}^d; \psi_z(x) \geq \gamma \rbrace$ contains a connected component of diameter $\geq \frac{L}{10}$ and for every neighboring box $B_{z'}$ (with $z' \in \mathbb{L}$, $|z-z'| = L$), all connected components of $B_z \cap \lbrace x \in \mathbb{Z}^d; \psi_z(x) \geq \gamma \rbrace$ and $B_{z'} \cap \lbrace x \in \mathbb{Z}^d; \psi_{z'}(x) \geq \gamma \rbrace$ of diameter $\geq \frac{L}{10}$ must be connected in $D_z \cap \lbrace x \in \mathbb{Z}^d; \psi_z(x) \geq \delta \rbrace$. Moreover, we recall (5.9) of \cite{S15} for the notion of an $h$-good box $B_z$ at level $a > 0$ (which is otherwise called $h$-bad at level $a$), which means that $\inf_{D_z} h_z > -a$. \\
 
 In the following, we state three facts from \cite{S15}, that are going to act as analogues to (4.13), (4.14) and (4.16) of \cite{NS17} in the proof. 
 \\
 1) The following connectivity statement holds, cf. Lemma 5.3 of \cite{S15}: 
 \begin{equation}
 \label{eq:Connectivity}
 \begin{split}
 &\text{If }B_{z_i}, 0 \leq i \leq n \text{ is a sequence of neighboring $L$-boxes, i.e. } z_i, 0 \leq i \leq n, \\
 & \text{form a nearest-neighbor path in }\mathbb{L} = L\mathbb{Z}^d, \text{ all of which are $\psi$-good at levels $\gamma,\delta$}
 \\ &\text{and $h$-good at level $a$, then there is a path in } E^{\geq \delta - a}\cap \left(\bigcup_{i=0}^n D_{z_i} \right) \text{ starting in $B_{z_0}$} \\
 &\text{and ending in $B_{z_n}$.} 
 \end{split}
 \end{equation}
 2) The next statement will provide us with an upper bound on the probability that all boxes associated to a finite collection $\mathcal{C} \subseteq \mathbb{L}$ become $h$-bad. To this end, consider
 \begin{equation}
 \label{eq:ConditionsOnC}
 \mathcal{C} \text{ a non-empty, finite subset of $\mathbb{L}$ containing points at mutual $|\cdot|_\infty$-distance $\geq \overline{K}L$},
 \end{equation}
 where $\overline{K} = 2K+1$, and furthermore
 \begin{equation}
 \label{eq:DefC}
 C = \bigcup_{z \in \mathcal{C}}B_z.
 \end{equation}
 Corollary 4.4 of \cite{S15} gives us that 
 \begin{equation}
 \label{eq:ExponentialControl}
 \limsup_L \sup_{\mathcal{C}} \left\lbrace \log \pM\left[ \bigcap_{z \in \mathcal{C}} \lbrace \inf_{D_z} h_z \leq -a \rbrace \right] + \frac{1}{2}\left( a - \frac{c_4}{K}\sqrt{\frac{|\mathcal{C}|}{\text{cap}_{\mathbb{Z}^d}(C)}}\right)^2_+ \frac{\text{cap}_{\mathbb{Z}^d}(C)}{\alpha(K)} \right\rbrace \leq 0,
 \end{equation}
 with $\alpha(K) > 1$ and $\lim_K \alpha(K) = 1$, and $\sup_{\mathcal{C}}$ denoting the supremum over all finite collections $\mathcal{C} \subseteq \mathbb{L}$ satisfying \eqref{eq:ConditionsOnC}. \\
 3) Finally, we need a super-exponential bound on the probability that 'many boxes' within a certain range become $\psi$-bad. Namely, there exists a positive function $\rho$ depending on $L$ (as well as on $\gamma, \delta$ and $K$), with $\lim_L \rho(L) = 0$ such that with the definition
 \begin{equation}
 N_L = \frac{L^{d-1}}{\log L}, \text{ for } L > 1,
 \end{equation}
 we have the bound
 \begin{equation}
 \label{eq:SuperExponentialBound}
 \begin{split}
 \lim_L \frac{1}{N_L^{d-2}} \log \pM[&\text{there are at least $\rho(L)\left(N_L/L \right)^{d-1}$ columns in}\\
 &\text{direction $e$ containing a $\psi$-bad box at levels $\gamma > \delta$}] = -\infty,
 \end{split}
 \end{equation}
 for every $e$ in the canonical basis of $\mathbb{R}^d$,
 where (as below (4.16) of \cite{NS17}), for a vector $e$ of the canonical basis of $\mathbb{R}^d$, one defines a column in $[-N_L,N_L]^d$ as the collection of $L$-boxes that intersect $[-N_L,N_L]^d$ and have the same projection on the hyperplane  $\lbrace x \in \mathbb{Z}^d; x\cdot  e = 0 \rbrace$ of $\mathbb{Z}^d$. This result is a slight modification of (5.18) and Proposition 5.4 of \cite{S15}. \\
 
 Recall the definition of $\rho_\ast(u)$ in (4.17) of \cite{NS17} and choose a positive sequence $(\gamma_N)_{N \geq 1}$ satisfying i) - iv) of (4.18) of the same reference, in particular $\gamma_N \leq 1$ and $\gamma_N \rightarrow 0$ as $N \rightarrow \infty$. We will now specify $L$ to be a function of $N$ (and call this choice $L_0$ from now on). Thus we define
 \begin{equation}
 \label{eq:L0Def}
 L_0 = \left[ \left(\gamma_N^{-1} N\log N \right)^{\frac{1}{d-1}}\right], \qquad \widehat{L}_0 = 100d \left[ \sqrt{\gamma_N}N \right],
 \end{equation}
 as well as the lattices
 \begin{equation}
 \mathbb{L}_0 = L_0 \mathbb{Z}^d, \qquad \widehat{\mathbb{L}}_0 = \frac{1}{100d}\widehat{L}_0\mathbb{Z}^d \left(= \left[ \sqrt{\gamma_N}N \right] \mathbb{Z}^d \right).
 \end{equation}
 With these preparations, we can start the proof of Theorem \ref{Theorem31}. Let us give a short outline of the coarse-graining procedure that is used in the proof. \\
 
 For large $N$, on the event $\mathcal{D}^\alpha_N$, one can extract a certain interface of $L_0$-boxes, either $\psi$-bad at levels $\gamma > \delta$ or $h$-bad at level $a$, with $a + \alpha = \delta$, that 'blocks' the way between $A_N$ and the complement of $B(0,(M+1)N)$. This is a consequence of the connectivity statement \eqref{eq:Connectivity}. One can then track the presence of this interface of $L_0$-boxes by using nearly macroscopic boxes of size $\widehat{L}_0 \gg L_0$, and select a random subset of $\widehat{\mathbb{L}}_0$, where the blocking boxes have a non-degenerate presence within $B(x,\widehat{L}_0)$, $x \in \widehat{\mathbb{L}}_0$, cf. \eqref{eq:SelectionOfBoxes}. Upon discarding an event of super-exponentially decaying probability, we receive an interface of boxes of size $\widehat{L}_0$, such that in each box there will be a substantial presence of boxes $B_z$ of size $L_0$, all $\psi$-good at levels $\gamma > \delta$ and $h$-bad at level $a$ (we will only need the fact that they are all $h$-bad). The selection of these boxes involves the application of certain isoperimetric controls of \cite{DP96}. This gives a partition, with a small combinatorial complexity of order $\exp(o(N^{d-2}))$ of the event $\mathcal{D}^\alpha_N \setminus \mathcal{B}_N$, where $\mathcal{B}_N$ has for our purposes negligible probability, into events $\mathcal{D}_{N,\kappa}$, $\kappa \in \mathcal{K}_N$ (see \eqref{eq:CoarseGraningScheme}). Below we provide an illustration of the situation when $\mathcal{D}_{N,\kappa}$ occurs and the scales we are using. With the help of \eqref{eq:ExponentialControl}, we are able to reduce the derivation of the desired upper bound to finding a uniform lower bound on $\text{cap}_{\mathbb{Z}^d}(C)$, $C$ being the union of selected $L_0$-boxes, which is where the pivotal Corollary 3.4 of \cite{NS17} comes into play.
   \begin{figure}[h]\label{fig:scalesDisc}
  \centering
  \includegraphics[width=0.7\textwidth]{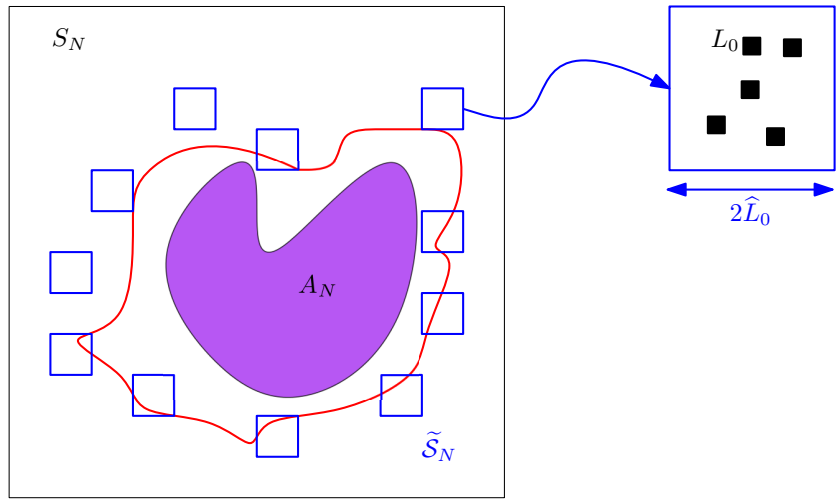} %placeholder for now
  \caption{Illustration of the situation when $\mathcal{D}_{N,\kappa}$ occurs, with the set of selected boxes of side-length $2\widehat{L}_0$ surrounding $A_N$ on the left (in blue), and selected $L_0$-boxes within one of these boxes (in black).}
 \end{figure}
 \begin{proof}[Proof of Theorem \ref{Theorem31}]
 In what follows, we assume
 \begin{equation}
 \label{eq:ConventionParameters}
 \alpha + a = \delta ( < \gamma < \overline{h}),
 \end{equation}
 and eventually we will let $\delta$ tend to $\overline{h}$.
 Recall the definitions of $L_0$ and $\widehat{L}_0$ from \eqref{eq:L0Def}. Without loss of generality, we assume that $\mathring{A} \neq \emptyset$ and $N \geq N_0(A,M)$ such that $A_N \subseteq B(0,MN) \setminus S_N$. We introduce the random subset 
 \begin{equation}
 \begin{split}
 \mathcal{U}^1 = & \text{ the union of all $L_0$-boxes $B_z$ either contained in $B(0,(M+1)N)^c$ or }\\
 & \text{ connected to an $L_0$-box in $B(0,(M+1)N)^c$ by a path of $L_0$-boxes $B_{z_i}$,}\\
 & \text{ $0 \leq i \leq n$, all (except possibly the last one) $\psi$-good at levels $\gamma > \delta$} \\
 & \text{ and  $h$-good at level $a > 0$.} 
 \end{split}
 \end{equation}
 One defines the local density
 \begin{equation}
 \widehat{\sigma}(x) = \frac{|\mathcal{U}^1 \cap B(x, \widehat{L}_0)|}{| B(x, \widehat{L}_0)|}, \qquad x \in \mathbb{Z}^d,
 \end{equation}
 which measures the presence of $\mathcal{U}^1$ within $B(x,\widehat{L}_0)$, and fulfills
 \begin{align}
 \widehat{\sigma}(x) & = 1,\text{ when } B(x,\widehat{L}_0 + L_0) \subseteq B(0,(M+1)N)^c \\ 
 \widehat{\sigma}(x) & = 0, \text{ when } B(x,\widehat{L}_0 + L_0) \subseteq A_N, \text{ for large enough }N \text{ on } \mathcal{D}^\alpha_N,
\end{align} see (4.30) and (4.31) of \cite{NS17}. We then introduce the random subset of $\widehat{\mathbb{L}}_0$, where the blocking boxes have a significant presence:
 \begin{equation}
 \label{eq:SelectionOfBoxes}
 \widehat{\mathcal{S}}_N = \left\lbrace x \in \widehat{\mathbb{L}}_0; \widehat{\sigma}(x) \in \left[\tfrac{1}{4}, \tfrac{3}{4}\right] \right\rbrace,
 \end{equation}
 as well as the compact subset $\Delta_N \subseteq \mathbb{R}^d$: 
 \begin{equation}
 \label{eq:CompactSubsetDelta}
 \Delta_N = \bigcup_{x \in \widehat{\mathcal{S}}_N } B_{\mathbb{R}^d}\left(\tfrac{x}{N}, \tfrac{1}{50d} \tfrac{\widehat{L}_0}{N} \right).
 \end{equation}
 $\Delta_N$ fulfills the 'insulation property', that is
 \begin{equation}
 \label{eq:InsulationProperty}
 \begin{split}
 & \qquad \qquad\qquad \widehat{\mathcal{S}}_N \subseteq B(0,(M+2)N) \cap \widehat{\mathbb{L}}_0, \text{ and} \\ 
 &\text{on $\mathcal{D}^\alpha_N$, the compact set $\lbrace z \in A; d(z,\partial A) \geq \tfrac{\widehat{L}_0 + L_0 + 1}{N}$ is }\\
 &\text{contained in the union of the bounded components of $\mathbb{R}^d \setminus \Delta_N \rbrace$}
 \end{split}
\end{equation}  (see (4.34), (4.35) of \cite{NS17}), the proof of which pertains to our case line by line. \\
 
 As a next step, we define the 'bad event'
 \begin{equation}
 \begin{split}
 \mathcal{B}_N = \bigcup_e & \lbrace \text{there are at least $\rho(L_0) (N_{L_0}/L_0)^{d-1}$ columns of $L_0$-boxes} \\
 & \text{in the direction $e$ in $B(0,10(M+1)N)$ that contain} \\
 & \text{a $\psi$-bad $L_0$-box at levels $\gamma > \delta$} \rbrace.
 \end{split}
 \end{equation}
 Note that $\mathcal{B}_N$ is the event from (4.22) of \cite{NS17} with 'bad$(\alpha,\beta,\gamma)$' replaced by $\psi$-bad at levels $\gamma > \delta$, and because of \eqref{eq:SuperExponentialBound} it holds that 
 \begin{equation}
 \label{eq:SuperExpCombined}
 \lim_N \frac{1}{N^{d-2}} \log \pM[\mathcal{B}_N] = -\infty,
 \end{equation}
 similar as in Lemma 4.2 of \cite{NS17}. Thus, $\mathcal{B}_N$ is negligible for our purposes, which allows us to work with the 'effective event' 
 \begin{equation}
 \widetilde{\mathcal{D}}^\alpha_N = \mathcal{D}^\alpha_N \setminus \mathcal{B}_N.
 \end{equation}
 
  We proceed as in \cite{NS17} and extract a maximal (measurable random) subset $\widetilde{\mathcal{S}}_N$ of $\widehat{\mathcal{S}}_N$ with the property that $B(x,2\widehat{L}_0) \cap B(x',2\widehat{L}_0) = \emptyset$ for any $x \neq x'$ in $\widetilde{\mathcal{S}}_N$.  On the complement of $\mathcal{B}_N$, one can for large $N$ choose for every $x \in \widetilde{\mathcal{S}}_N$ a projection $\widetilde{\pi}_x$ on the hyperplane of points with vanishing $\widetilde{i}_x$-coordinate (where $1 \leq  \widetilde{i}_x \leq d$ depends on $\widetilde{\pi}_x$) and a collection $\widetilde{\mathcal{C}}_x$ of $L_0$-boxes intersecting $B(x,\widehat{L}_0)$ with $\widetilde{\pi}_x$-projection at mutual distance $\geq \overline{K}L_0$ and cardinality $\left[ \left( \frac{c'}{K} \frac{\widehat{L}_0}{L_0}\right)^{d-1}\right]$, such that $B_z$ is $\psi$-good at levels $\gamma > \delta$ and $h$-bad at level $a$ for all $z \in \widetilde{\mathcal{C}}_x$, see (4.39)-(4.41) of \cite{NS17} for the details of this construction. \\
 
 As below (4.41) of \cite{NS17}, we thus obtain for large $N$ a random variable
 \begin{equation}
 \label{eq:Kappa_Def}
 \kappa_N = \left(\widehat{\mathcal{S}}_N, \widetilde{\mathcal{S}}_N, (\widetilde{\pi}_x,\widetilde{\mathcal{C}}_x)_{x\in \widetilde{\mathcal{S}}_N} \right) \qquad \text{ on } \widetilde{\mathcal{D}}^\alpha_N (= \mathcal{D}^\alpha_N \setminus \mathcal{B}_N)
 \end{equation}
 ($\kappa_N$ is constructed by selecting from $\widehat{\mathcal{S}}_N$ in a measurable fashion the set $\widetilde{\mathcal{S}}_N$ and then picking $(\widetilde{\pi}_x,\widetilde{\mathcal{C}}_x)_{x \in \mathcal{S}_N}$ according to the procedure explained above, and thus is a well-defined random variable on $\mathcal{D}^\alpha_N \setminus \mathcal{B}_N$).
 Moreover, see (4.43) of \cite{NS17}, the set of possible values of $\kappa_N$, which we call $\mathcal{K}_N$, has cardinality
 \begin{equation}
 \label{eq:Complexity}
 |\mathcal{K}_N| = \exp\left(o(N^{d-2}) \right).
 \end{equation}
 
 We define the coarse graining of the 'good part' $\widetilde{\mathcal{D}}^\alpha_N$ of $\mathcal{D}^\alpha_N$:
 \begin{equation}
 \label{eq:CoarseGraningScheme}
 \widetilde{\mathcal{D}}^\alpha_N = \bigcup_{\kappa \in \mathcal{K}_N} \mathcal{D}_{N, \kappa},\qquad \text{ with } \mathcal{D}_{N,\kappa} = \widetilde{\mathcal{D}} ^\alpha_N \cap \lbrace \kappa_N = \kappa \rbrace, \kappa \in \mathcal{K}_N.
 \end{equation}
 By applying a union bound and using the super-exponential bound \eqref{eq:SuperExpCombined}, we get
 \begin{equation}
 \limsup_N \frac{1}{N^{d-2}} \log \pM[\mathcal{D}^\alpha_N] \leq \limsup_N \sup_{\kappa \in \mathcal{K}_N} \frac{1}{N^{d-2}} \log \pM[\mathcal{D}_{N,\kappa}].
 \end{equation}
 Recall that on the event $\mathcal{D}_{N,\kappa}$ with $\kappa = (\widehat{\mathcal{S}}, \widetilde{\mathcal{S}}, (\widetilde{\pi}_x,\widetilde{\mathcal{C}}_x)_{x \in \widetilde{\mathcal{S}}})$, all $B_z$, $z \in \widetilde{\mathcal{C}}_x$, $x \in \widetilde{\mathcal{S}}$ are $h$-bad at level $a$ and at a mutual distance $\geq \overline{K}L_0$ for large $N$, which implies that
 \begin{equation}
 \pM[\mathcal{D}_{N, \kappa}] \leq \pM\left[ \bigcap_{x \in \widetilde{\mathcal{S}}, z \in \widetilde{\mathcal{C}}_x} \lbrace \inf_{D_z} h_z \leq -a \rbrace \right],
 \end{equation}
 and we can use \eqref{eq:ExponentialControl} to obtain 
 \begin{equation}
 \label{eq:ProofStep1}
 \begin{split}
 \limsup_N \frac{1}{N^{d-2}} & \log \pM[\widetilde{\mathcal{D}}^\alpha_N] \\ 
 & \leq - \liminf_N \frac{1}{N^{d-2}} \inf_{\kappa \in \mathcal{K}_N} \left\lbrace \frac{1}{2}\left(a - \frac{c_4}{K}\sqrt{\frac{|\widetilde{\mathcal{C}}|}{\text{cap}_{\mathbb{Z}^d}(C)}}\right)^2_+ \frac{\text{cap}_{\mathbb{Z}^d}(C)}{\alpha(K)} \right\rbrace,
 \end{split}
 \end{equation}
 where we defined 
 \begin{equation}
 \label{eq:TildeCDef}
 \widetilde{\mathcal{C}} = \bigcup_{x \in \widetilde{\mathcal{S}}} \widetilde{\mathcal{C}}_x,
 \end{equation}
 which is a finite subset of points in $\mathbb{L}_0$ with mutual $|\cdot|_\infty$-distance $\geq \overline{K}L_0$, and $C$ is defined as
 \begin{equation}
 \label{eq:CDef}
 C = \bigcup_{x \in \widetilde{\mathcal{S}}} \bigcup_{z \in \widetilde{\mathcal{C}}_x} B_z \left( = \bigcup_{z \in \widetilde{\mathcal{C}}} B_z \right).
 \end{equation}
 $|\widetilde{\mathcal{C}}|$ can be bounded above as follows: Since $\widetilde{\mathcal{S}} \subseteq \widehat{\mathcal{S}}$ is contained in $B(0,(M+2)N) \cap \widehat{\mathbb{L}}_0$ (this is due to the insulation property \eqref{eq:InsulationProperty}) and the collection of $L_0$-boxes $\widetilde{\mathcal{C}}_x$ intersects $B(x,\widehat{L}_0)$ with $\widetilde{\pi}_x$-projection at mutual distance at least $\geq \overline{K}L_0$ and has cardinality $\left[ \left(\frac{c'}{K}\frac{\widehat{L}_0}{L_0}\right)^{d-1} \right]$, we find that for large $N$
 \begin{equation}
 \label{eq:BoundChat}
 \begin{split}
 |\widetilde{\mathcal{C}}| &\leq c \left(\frac{N}{\widehat{L}_0}\right)^{d} \left(\frac{\widehat{L}_0}{L_0}\right)^{d-1} \stackrel{\eqref{eq:L0Def}}{\leq} c'' \gamma_N^{-\frac{1}{2}} \gamma_N \frac{N^{d-2}}{\log N} = c'' \sqrt{\gamma_N} \frac{N^{d-2}}{\log N}.
 \end{split}
 \end{equation}
 Now, we proceed to associate to $C$ a 'scaled $\mathbb{R}^d$-filling' $\Sigma$, 
 \begin{equation}
 \label{eq:ScaledRdFilling}
 \Sigma = \frac{1}{N} \left(\bigcup_{z \in \widetilde{\mathcal{C}}} (z + [0,L_0]^d ) \right) \subseteq \mathbb{R}^d,
 \end{equation}
 together with $S = \partial U_1$, where $U_1$ is the unbounded component of $\mathbb{R}^d \setminus \Delta_N$ (see \eqref{eq:CompactSubsetDelta}) and one can show that for any $A' \subseteq \mathring{A}$ compact subset and some $l_\ast \geq 0$ (depending on $A$, $A'$), for large $N$ and all $\kappa \in \mathcal{K}_N$, $W_x[H_\Sigma < \tau_{10 \frac{\widehat{L}_0}{N}}] \geq c(K)$ for all $x \in S$. This essentially allows us to use the capacity lower bound \eqref{eq:SolidificationEstimate} to infer that
 \begin{equation}
 \label{eq:SolidificationLowerBound}
 \liminf_N \inf_{\kappa \in \mathcal{K}_N} \text{cap}(\Sigma) \geq \text{cap}(A'). 
 \end{equation}
 We refer to (4.48)-(4.54) of \cite{NS17} for the details of this calculation. To conclude the proof, we use the bound on $|\widetilde{\mathcal{C}}|$ \eqref{eq:BoundChat}, together with the fact that 
\begin{equation}
\label{eq:ComparisonDiscreteBrownianCap}
 \varliminf_K \varliminf_N \inf_{\kappa \in \mathcal{K}_N} \frac{1}{N^{d-2}} \text{cap}_{\mathbb{Z}^d}(C) \geq \frac{1}{d} \varliminf_K \varliminf_N \inf_{\kappa \in \mathcal{K}_N} \text{cap}(\Sigma) \stackrel{\eqref{eq:SolidificationLowerBound}}{\geq} \frac{1}{d}\text{cap}(A') > 0,
\end{equation} 
using Proposition A.1 of \cite{NS17}. So, for large enough $K$ one has
 \begin{equation}
 \begin{split}
 \label{eq:AdditionalTermVanishes}
 0 \leq \limsup_N \sup_{\kappa \in \mathcal{K}_N} \frac{|\widetilde{\mathcal{C}}|}{\text{cap}_{\mathbb{Z}^d}(C)} & \leq \limsup_N \sup_{\kappa \in \mathcal{K}_N} \frac{c''\sqrt{\gamma_N}N^{d-2}/ \log N}{\text{cap}_{\mathbb{Z}^d}(C)} \\
 & \leq \frac{\lim_N c''\sqrt{\gamma_N} / \log N}{\liminf_N \inf_{ \kappa \in \mathcal{K}_N} \frac{\text{cap}_{\mathbb{Z}^d}(C)}{N^{d-2}}} = 0,
 \end{split}
\end{equation}  
that is $\lim_N \sup_{\kappa \in \mathcal{K}_N} \frac{|\widetilde{\mathcal{C}}|}{\text{cap}_{\mathbb{Z}^d}(C)} = 0$. From \eqref{eq:ProofStep1}, we obtain for large $K$ that
\begin{equation}
\limsup_N \frac{1}{N^{d-2}} \log \pM[\mathcal{D}^\alpha_N] \leq - \liminf_N \frac{1}{N^{d-2}} \inf_{\kappa \in \mathcal{K}_N} \frac{1}{2} \frac{a^2}{\alpha(K)} \text{cap}_{\mathbb{Z}^d}(C).
\end{equation}
Taking $\limsup_K$ and using \eqref{eq:ComparisonDiscreteBrownianCap} (recall that $\lim_{K \rightarrow \infty} \alpha(K) = 1$), we arrive at
\begin{equation}
\limsup_N \frac{1}{N^{d-2}} \log \pM[\mathcal{D}^\alpha_N] \leq - \frac{1}{2d} a^2 \text{cap}(A').
\end{equation}
Letting $a$ tend to $\overline{h}-\alpha$ and taking $A' \uparrow \mathring{A}$ now concludes the proof.
 \end{proof}
 \begin{assumption}
 1) As already stated in the introduction, if $h_{\ast\ast} = \overline{h}$ and if $A$ is regular in the sense that $\text{cap}(A) = \text{cap}(\mathring{A})$, the combination of Theorems \ref{Theorem21} and \ref{Theorem31} would give 
 \begin{equation}
 \lim_N \frac{1}{N^{d-2}} \log \pM[\mathcal{D}^\alpha_N] = - \frac{1}{2d}(h_\ast-\alpha)^2\text{cap}(A).
 \end{equation}
 2) For each choice of rationals $\delta < \gamma < \overline{h}$ and integer $K \geq 100$, choose $\gamma_N$ as above \eqref{eq:L0Def}.
 The proof of Theorem \ref{Theorem31} could have been done with $A_N$ replaced by $\widehat{A}_N = (NA(\epsilon_N)) \cap \mathbb{Z}^d$ with some sequence $\epsilon_N \rightarrow 0$ such that $\epsilon_N / (\widehat{L}_0 / N)\rightarrow \infty$ (i.e. $\epsilon_N/\sqrt{\gamma_N} \rightarrow \infty$) as $N \rightarrow \infty$, for all above choices of $\gamma_N$ (this can be done by a diagonalization procedure), where $A(\epsilon_N)$ denotes the closed $\epsilon_N$-neighborhood of $A$ in $|\cdot|_\infty$-norm, see Remark 4.5, 3) of \cite{NS17} for details. 
 That way, one can obtain the result
 \begin{equation}
  \limsup_N \frac{1}{N^{d-2}} \log \pM[ \widehat{A}_N\stackrel{\geq \alpha}{\nleftrightarrow} S_N ] \leq - \frac{1}{2d}(\overline{h} - \alpha)^2 \text{cap}(A).
 \end{equation}
 \hfill $\square$
 \end{assumption} 
 \section{Entropic repulsion by disconnection}
 In this final section we derive an asymptotic upper bound on the probability of the event that the level set below $\alpha$ disconnects $A_N$ from $S_N$ \textit{and} that the average of the Gaussian free field over the discrete blow-up of a non-empty open subset $V$ of $\mathring{A}$ with $\overline{V} \subseteq \mathring{A}$ becomes bigger than $-(\overline{h} - \alpha) + \Delta$, for some $\Delta > 0$. Throughout this section, we will consider a compact subset $A$ that fulfills \eqref{eq:ContainedInBall} and assume that $\mathring{A}$ is non-empty. \\ 
  
Let us first give an outline of the proof of the main result of this section, Theorem \ref{Theorem43}. Crucially, we will make use of a modified version of Corollary 4.4 of \cite{S15}, see Lemma \ref{CrucialLemma} below. Recall that given $\alpha < \overline{h}$, we choose parameters $a$, $\delta$ and $\gamma$ according to \eqref{eq:ConventionParameters}. By coarse-graining of the 'good part' $\widetilde{\mathcal{D}}^\alpha_N$ of the disconnection event, one is brought to consider a $\kappa \in \mathcal{K}_N$ (see \eqref{eq:Kappa_Def} and \eqref{eq:CoarseGraningScheme}) that gives rise to a union $C$ of boxes, located at points in $\widetilde{\mathcal{C}}$ at mutual distance $\geq \overline{K}L_0$, which are all $h$-bad at level $a$ (see \eqref{eq:TildeCDef} and \eqref{eq:CDef}). Corollary 4.4 of \cite{S15} (see \eqref{eq:ExponentialControl}) gives an asymptotic exponential upper bound on the probability of the event that in a large finite collection of such well-separated boxes, all are $h$-bad at some level $a > 0$. Given $\Delta > 0$, we show that the event that this happens \textit{and} the average of the Gaussian free field over the discrete blow-up of $V$ is above $-a+ \Delta$ for $a$ as in \eqref{eq:ConventionParameters} is contained in $\lbrace \inf_{f \in \mathcal{F}} \widehat{Z}_f \leq - a - \beta \Delta \rbrace$ for some $\beta > 0$, where $(\widehat{Z}_f)_{f \in \mathcal{F}}$ is a certain Gaussian process (in fact, a slightly modified version of $(Z_f)_{f \in \mathcal{F}}$ from Section 4 of \cite{S15}). By bounding from above the variance of $\widehat{Z}_f$, we obtain a modification of \eqref{eq:ExponentialControl} using the Borell-TIS inequality, see \eqref{eq:ModifiedExponentialControl}. \\
\\
Our first step is a proposition that shows that with probability tending to $1$ as $N \rightarrow \infty$, the simple random walk started at a point well inside $A_N$ must hit the set $C$  of surrounding boxes for any choice of $\kappa \in \mathcal{K}_N$. The proof uses the solidification estimate for Brownian motion and a strong coupling result of \cite{E89} in the spirit of Komlós, Major and Tusnády that couples the simple random walk and Brownian motion. \\

\begin{proposition}
\label{Prop41}
Consider a compact subset $A' \subseteq \mathring{A}$ and its discrete blow-up $A'_N = (NA') \cap \mathbb{Z}^d$. Then one has:
\begin{equation}
\liminf_{N \rightarrow \infty} \inf_{\kappa \in \mathcal{K}_N} \inf_{x\in A'_N} P_x[H_C < \infty] \geq 1.
\end{equation}
\begin{proof}
For $\kappa \in \mathcal{K}_N$, we introduce the sets 
\begin{equation}
\widetilde{\Gamma} = \bigcup_{z \in \widetilde{\mathcal{C}}} (z + [3L_0/8,5L_0/8]^d), \qquad \widetilde{\Sigma} = \frac{1}{N} \widetilde{\Gamma} (\subseteq \Sigma),
\end{equation}
$\Sigma$ being the scaled $\mathbb{R}^d$-filling of $C$ from $\eqref{eq:ScaledRdFilling}$. We note that $\widetilde{\Sigma}$ fulfills the solidification estimate \eqref{eq:SolidificationProb}, since one can show analogously to $\Sigma$ that for large $N$ and all $\kappa \in \mathcal{K}_N$, $W_x[H_{\widetilde{\Sigma}} < \tau_{10 \frac{\widehat{L}_0}{N}}] \geq c(K)$ for all $x \in \partial U_1$, where $U_1$ is defined as below \eqref{eq:ScaledRdFilling}. Together with the scaling invariance of Brownian motion, this will allow us to show that $W_x[H_{\widetilde{\Gamma}} < \infty]$ is close to $1$ uniformly in $x \in A_N'$ and $\kappa \in \mathcal{K}_N$, as $N \rightarrow \infty$. The claim will follow by bounding this probability from above by the sum of $\inf_{\kappa \in \mathcal{K}_N} \inf_{x \in A_N'}P_x[H_C < \infty]$ and some error that converges to $0$ as $N \rightarrow \infty$, see \eqref{eq:BoundW} and the argument following it. \\  

Let $\overline{B}(x)$ denote the closed Euclidean ball in $\mathbb{R}^d$ with radius $10d(M+2)$, centered at $x$. For a closed set $F \subseteq \mathbb{R}^d$, we denote by $L_{F} = \sup \lbrace 0 < t < \infty; Z_t \in F\rbrace$ the time of last visit of the Brownian motion to $F$ (with the convention that $L_F = 0$ if the set on the right-hand side is empty). Clearly, $\widetilde{\Gamma} \subseteq N\overline{B}(x)$ for every $x \in A_N'$ by the first part of \eqref{eq:InsulationProperty}, so $W_x$-a.s. $H_{\widetilde{\Gamma}} \leq L_{N\overline{B}(x)}$ on $\lbrace H_{\widetilde{\Gamma}}<\infty \rbrace$. 
 We fix some $\varepsilon > 0$ and obtain by scaling and translation invariance as well as transience of Brownian motion that $W_x[L_{N\overline{B}(x)} > N^{2 + \varepsilon}] = W_0[L_{\overline{B}(0)} > N^\varepsilon] \rightarrow 0$ and thus:
\begin{equation}
\label{eq:Bound1}
W_x[H_{\widetilde{\Gamma}} < \infty] \leq W_x[H_{\widetilde{\Gamma}} \leq N^{2+\varepsilon}] + W_0[L_{\overline{B}(0)} > N^{\varepsilon}],
\end{equation}
and the second term on the right-hand side converges to $0$ as $N \rightarrow \infty$. Since the coupling result we apply below allows us to compare the simple random walk at integer times and Brownian motion at times that are integer multiples of $\frac{1}{d}$, we define $\widehat{H}_{\widetilde{\Gamma}}$ as the smallest integer multiple of $\frac{1}{d}$ bigger or equal to $H_{\widetilde{\Gamma}}$ and control the $|\cdot|_\infty$-distance of the Brownian motion at these two random times. 
On the event $\lbrace H_{\widetilde{\Gamma}} \leq N^{2+\varepsilon}\rbrace$, we apply the strong Markov property at $H_{\widetilde{\Gamma}}$, to obtain
\begin{equation}
\label{eq:Bound2}
W_x[H_{\widetilde{\Gamma}} \leq  N^{2+\varepsilon}] \leq  W_x[H_{\widetilde{\Gamma}} \leq N^{2+\varepsilon}, |Z_{H_{\widetilde{\Gamma}}} - Z_{\widehat{H}_{\widetilde{\Gamma}}}|_\infty < \tfrac{L_0}{8}] +W_0\bigg[\sup_{0 \leq t \leq 1/d} |Z_t|_\infty\geq \tfrac{L_0}{8}\bigg],
\end{equation}
and note that the last term tends to $0$ as $N \rightarrow \infty$.
 By Theorem 4 of \cite{E89}, there exists a probability space $(\widetilde{\Omega}, \widetilde{\mathcal{F}},\widetilde{P})$, a simple random walk $(\widetilde{X}_n)_{n \geq 0}$ on $\mathbb{Z}^d$ and a Brownian motion $(\widetilde{Z}_t)_{t \geq 0}$ in $\mathbb{R}^d$, both started in $x$, on this probability space such that:
 \begin{equation}
 \label{eq:Bound3}
 \widetilde{P}\left[ \max_{1 \leq k \leq 2d[N^{2+\varepsilon}]} |\widetilde{X}_k - \widetilde{Z}_{k/d} |_\infty \geq \tfrac{L_0}{8} \right] \rightarrow 0, \text{ as } N \rightarrow \infty.
 \end{equation}
 We denote by $H^{\widetilde{Z}}_{\widetilde{\Gamma}}$ and $H^{\widetilde{X}}_{C}$ the entrance times into $\widetilde{\Gamma}$ and $C$, associated to $\widetilde{Z}_\cdot$ and $\widetilde{X}_\cdot$, respectively. By collecting \eqref{eq:Bound1}-\eqref{eq:Bound3}, we obtain (with hopefully obvious notation):
 \begin{equation}
 \begin{split}
 \label{eq:BoundW}
 W_x[H_{\widetilde{\Gamma}} < \infty] \leq \widetilde{P}& \left[H^{\widetilde{Z}}_{\widetilde{\Gamma}} \leq N^{2+\varepsilon}, \max_{1 \leq k \leq 2d[N^{2+\varepsilon}]} |\widetilde{X}_k - \widetilde{Z}_{k/d} |_\infty < \tfrac{L_0}{8}, |\widetilde{Z}_{H^{\widetilde{Z}}_{\widetilde{\Gamma}}} - \widetilde{Z}_{\widehat{H}^{\widetilde{Z}}_{\widetilde{\Gamma}}}|_\infty < \tfrac{L_0}{8} \right] \\ 
 & + o(1) \qquad \text{ as $N \rightarrow \infty$, uniformly in $\kappa \in \mathcal{K}_N$, $x \in A_N$.}
 \end{split}
 \end{equation}
 For large $N$, the event under the probability on the right-hand side of \eqref{eq:BoundW} is contained in $\lbrace H^{\widetilde{X}}_C < \infty\rbrace$, and by scaling invariance of Brownian motion, the left-hand side can be bounded uniformly from below by $\inf_{\kappa \in \mathcal{K}_N} \inf_{x \in A'} W_x[H_{\widetilde{\Sigma}} < \infty]$. The result now follows by taking in \eqref{eq:BoundW} the infimum over $x \in A_N'$ and over $\kappa \in \mathcal{K}_N$, letting $N \rightarrow \infty$ and using the solidification estimate for porous interfaces $\widetilde{\Sigma}$ from \eqref{eq:SolidificationProb} (see also (4.49)-(4.51) of \cite{NS17}).
\end{proof}
\end{proposition}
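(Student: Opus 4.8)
The plan is to move the problem from the simple random walk to Brownian motion, where the solidification estimate for porous interfaces \eqref{eq:SolidificationProb} can be brought to bear, and then to transfer the conclusion back to the walk by a strong coupling. As a first step I would replace $C$ by a slightly smaller ``core'' lying inside it with a safety margin: set $\widetilde{\Gamma} = \bigcup_{z \in \widetilde{\mathcal{C}}}\big(z + [\tfrac38 L_0, \tfrac58 L_0]^d\big) \subseteq \mathbb{R}^d$ and $\widetilde{\Sigma} = \tfrac1N \widetilde{\Gamma} \subseteq \Sigma$, with $\Sigma$ as in \eqref{eq:ScaledRdFilling}. Since each cube of $\widetilde{\Gamma}$ sits inside the corresponding $L_0$-box $B_z$ with a margin $\tfrac38 L_0$ on all sides, for large $N$ any lattice point at $|\cdot|_\infty$-distance at most $\tfrac14 L_0$ from $\widetilde{\Gamma}$ automatically belongs to $C$; this $\tfrac14 L_0$ cushion is what later absorbs the coupling and discretization errors.

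The second step is to show that Brownian motion started anywhere in $A'$ hits $\widetilde{\Sigma}$ with probability tending to $1$, uniformly over $\kappa \in \mathcal{K}_N$. Running for $\widetilde{\Sigma}$ the same argument used for $\Sigma$ in the proof of Theorem \ref{Theorem31} — it only exploited the central sub-cubes of the $L_0$-boxes — gives, for large $N$ and every $\kappa$, that $W_z[H_{\widetilde{\Sigma}} < \tau_{10\widehat{L}_0/N}] \geq c(K) > 0$ for all $z \in \partial U_1$, where $U_1$ is the unbounded component of $\mathbb{R}^d \setminus \Delta_N$ (see \eqref{eq:CompactSubsetDelta}); equivalently $\widetilde{\Sigma}$ belongs to the class $\mathscr{S}_{U_0, \varepsilon, \eta}$ of \eqref{eq:ClassofporousInterf} with $U_0 = U_1^c$, $\varepsilon = 10\widehat{L}_0/N \to 0$ and $\eta = c(K)$. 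Moreover, the $[\tfrac14, \tfrac34]$-density selection \eqref{eq:SelectionOfBoxes} and the construction of $\Delta_N$ keep the local density of $U_1$ below $\tfrac12$ at every point of $A'$ down to a fixed scale $2^{-l_\ast}$ depending only on $A$ and $A'$, i.e. $U_0 \in \mathcal{U}_{l_\ast, A'}$; this is precisely the verification carried out in (4.48)--(4.54) of \cite{NS17}. The solidification estimate \eqref{eq:SolidificationProb} then yields $\sup_{\kappa \in \mathcal{K}_N} \sup_{x \in A'} W_x[H_{\widetilde{\Sigma}} = \infty] \to 0$, and by Brownian scaling and translation invariance the same holds for $W_x[H_{\widetilde{\Gamma}} = \infty]$ with $x$ ranging over $A'_N$.

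The third step descends to the random walk. Fix $\varepsilon > 0$ and pick a Euclidean ball $\overline{B}(x)$ of radius depending only on $d$ and $M$ about $x$ with $N\overline{B}(x) \supseteq \widetilde{\Gamma}$, possible by the insulation property \eqref{eq:InsulationProperty}; transience and Brownian scaling then show that a Brownian motion from $x \in A'_N$ makes its last visit to $N\overline{B}(x)$ before time $N^{2+\varepsilon}$ with probability $\to 1$, so only $W_x[H_{\widetilde{\Gamma}} \leq N^{2+\varepsilon}]$ is relevant. On that event I would use the strong coupling result of \cite{E89} to realize on one probability space a simple random walk $\widetilde{X}$ and a Brownian motion $\widetilde{Z}$, both started at $x$, with $\max_{k \leq 2d[N^{2+\varepsilon}]} |\widetilde{X}_k - \widetilde{Z}_{k/d}|_\infty < \tfrac18 L_0$ off an event of vanishing probability — there is ample room, since $L_0 \sim (\gamma_N^{-1} N \log N)^{1/(d-1)}$ grows at least like a fixed positive power of $N$ while the coupling error over $N^{2+\varepsilon}$ steps is of lower order. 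One further negligible term handles the displacement of $\widetilde{Z}$ between $H_{\widetilde{\Gamma}}$ and the next integer multiple of $\tfrac1d$, again less than $\tfrac18 L_0$ with high probability. Combining these, $\widetilde{X}$ comes within $\tfrac14 L_0$ of $\widetilde{\Gamma}$ at some integer time on an event of $\widetilde{P}$-probability $W_x[H_{\widetilde{\Gamma}} < \infty] - o(1)$, and hence enters $C$; thus $P_x[H_C < \infty] \geq W_x[H_{\widetilde{\Gamma}} < \infty] - o(1)$, uniformly in $x \in A'_N$ and $\kappa \in \mathcal{K}_N$. Taking infima and then $\liminf_N$, and invoking the second step, completes the proof.

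I expect the genuinely delicate point to be the uniform-in-$\kappa$ part of the second step: that $\widetilde{\Sigma}$ is ``felt'' from $\partial U_1$ with a strength $\eta = c(K)$ independent of $\kappa$ and $N$, and that $U_1^c \in \mathcal{U}_{l_\ast, A'}$ for a scale $l_\ast$ that is likewise uniform. This is exactly where the $[\tfrac14, \tfrac34]$-selection of the nearly macroscopic boxes \eqref{eq:SelectionOfBoxes} and the precise geometry of $\Delta_N$ enter, and it mirrors the corresponding step in the interlacements setting of \cite{NS17}. Granting that, the remaining ingredients — the timescale truncation via transience, Brownian scaling, and the strong invariance estimate for the walk — are routine, if a little tedious, bookkeeping.
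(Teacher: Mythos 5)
Your proposal reconstructs essentially the same proof as the paper: the same core $\widetilde{\Gamma}$ and scaled $\widetilde{\Sigma}$, the same verification that $\widetilde{\Sigma}$ is a porous interface to which the solidification estimate \eqref{eq:SolidificationProb} applies, the same truncation via transience and last-exit time from $N\overline{B}(x)$ before time $N^{2+\varepsilon}$, and the same strong coupling of Einmahl to transfer the hitting statement from Brownian motion back to the simple random walk. The only cosmetic difference is that you announce the cumulative error budget as $\tfrac14 L_0$ rather than tracking the two $\tfrac18 L_0$ pieces separately, which matches the paper's bookkeeping exactly.
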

The next lemma will play a crucial role in the proof of the main result of this section, Theorem \ref{Theorem43}. It gives a bound on the probability that $\inf_{D_z} h_z \leq -a$ for all $z \in \widetilde{\mathcal{C}}$ (recall the definitions of $D_z$ and $h_z$ from \eqref{eq:Boxes} and below \eqref{eq:Boxes}) and that the average of the Gaussian free field over the discrete blow-up of an open set well inside $A$ stays above $-a + \Delta$, with $a$ from \eqref{eq:ConventionParameters} and $\Delta > 0$. 
\begin{lemma}
\label{CrucialLemma}
Consider $\Delta > 0$, $V$ a non-empty open subset of $\mathring{A}$ with $\overline{V} \subseteq \mathring{A}$ and $\beta > 0$. There exists a function $\widetilde{\alpha}(\cdot)$ (dependent on $\beta$) with $\lim_K \widetilde{\alpha}(K) = 1$, see \eqref{eq:AlphaTildeDef}, such that for large enough $K$: 
 \begin{equation}
 \label{eq:ModifiedExponentialControl}
 \begin{split}
 \limsup_N \sup_{\kappa \in \mathcal{K}_N} & \bigg\lbrace \log \pM\bigg[ \bigcap_{z \in \widetilde{\mathcal{C}}} \lbrace \inf_{D_z} h_z \leq -a \rbrace \cap \bigg\lbrace \frac{1}{|V_N|} \sum_{x \in V_N} \varphi_x \geq -a + \Delta \bigg\rbrace \bigg]  \\
 & + \frac{1}{2}\left( a + \beta \Delta - \frac{c_5}{K}\sqrt{\frac{|\widetilde{\mathcal{C}}|}{\text{\normalfont{cap}}_{\mathbb{Z}^d}(C)}}\right)^2_+ \frac{\text{\normalfont{cap}}_{\mathbb{Z}^d}(C)}{\widetilde{\alpha}(K)+ \beta^2 c_6(V,M)} \bigg\rbrace \leq 0.
 \end{split}
\end{equation}
\end{lemma}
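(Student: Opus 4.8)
The strategy is to follow the proof of Corollary 4.4 of \cite{S15} (which yields \eqref{eq:ExponentialControl}), but to enlarge the relevant Gaussian process so that it also ``sees'' the average of $\varphi$ over $V_N$, and then to apply the Borell--TIS concentration inequality to the enlarged process. First I would recall the process $(Z_f)_{f \in \mathcal{F}}$ of Section 4 of \cite{S15}: for a suitable family $\mathcal{F}$ of finitely supported functions (essentially normalized equilibrium-type potentials of the boxes $D_z$, $z \in \widetilde{\mathcal{C}}$, relative to the $U_z$), the event $\bigcap_{z \in \widetilde{\mathcal{C}}} \{\inf_{D_z} h_z \leq -a\}$ is contained in $\{\inf_{f \in \mathcal{F}} Z_f \leq -a\}$, where $Z_f = \mathcal{E}_{\mathbb{Z}^d}(f, \varphi)$ has variance $\mathcal{E}_{\mathbb{Z}^d}(f,f)$, and one controls $\sup_f \sqrt{\mathrm{Var}(Z_f)}$ and $\mathbb{E}[\inf_f Z_f]$ via a capacity computation giving the $\mathrm{cap}_{\mathbb{Z}^d}(C)$ in the denominator and the $c_4/K$ correction term. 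The new input is: on the event $\{\frac{1}{|V_N|}\sum_{x \in V_N}\varphi_x \geq -a + \Delta\}$, one has $-\frac{1}{|V_N|}\sum_{x\in V_N}\varphi_x \leq a - \Delta$, i.e. the linear functional $Y := -\frac{1}{|V_N|}\sum_{x \in V_N}\varphi_x = \mathcal{E}_{\mathbb{Z}^d}(g_N,\varphi)$ for an appropriate finitely supported $g_N$ (obtained from $\frac{1}{|V_N|}\sum_{x\in V_N}$ via the Green operator, so that $\mathcal{E}_{\mathbb{Z}^d}(g_N,\varphi) = \langle\text{measure},\varphi\rangle$) is $\leq a - \Delta$.

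Next I would form, for each $f \in \mathcal{F}$ and the fixed $\beta > 0$, the combination $\widehat{Z}_f := \frac{1}{1+\beta}\big(Z_f + \beta Y\big)$ — or more precisely a normalization chosen so that on $\bigcap_z\{\inf_{D_z}h_z\leq -a\}\cap\{\frac{1}{|V_N|}\sum_{V_N}\varphi \geq -a+\Delta\}$ one gets $\inf_{f\in\mathcal{F}}\widehat Z_f \leq -a - \beta\Delta$ (the affine weights are arranged exactly so the $-a$'s add up with coefficient $1$ and the $+\Delta$ from $Y$ contributes $-\beta\Delta$). Then $(\widehat Z_f)_{f \in \mathcal{F}}$ is again a centered Gaussian process, and I would estimate its ``diameter'': $\mathbb{E}[\sup_f \widehat Z_f]$ is bounded, up to the $1/(1+\beta)$ factor, by $\mathbb{E}[\sup_f Z_f] + \beta\,\mathbb{E}[\sup_f (\text{const})] $ — since $Y$ does not depend on $f$, the supremum over $f$ only acts on the $Z_f$ part, so $\mathbb{E}[\inf_f \widehat Z_f]$ inherits the bound from \cite{S15} (this is where the $c_5/K\sqrt{|\widetilde{\mathcal C}|/\mathrm{cap}_{\mathbb{Z}^d}(C)}$ term comes from, essentially $c_5 \approx c_4$ after absorbing the normalization). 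The crucial variance bound is $\sup_{f\in\mathcal F}\mathrm{Var}(\widehat Z_f) \leq \frac{1}{(1+\beta)^2}\big(\sup_f \mathrm{Var}(Z_f) + \beta^2 \mathrm{Var}(Y) + 2\beta\,\mathrm{Cov}\big)$; one checks $\mathrm{Var}(Z_f) \leq \frac{\alpha(K)}{\mathrm{cap}_{\mathbb{Z}^d}(C)}(1+\beta)^2$-type bound as in \cite{S15}, while $\mathrm{Var}(Y) = \frac{1}{|V_N|^2}\sum_{x,y\in V_N}g(x,y)$ scales like $N^{2}/N^{d}\cdot$const $= c_6(V,M)/\mathrm{cap}_{\mathbb{Z}^d}(C)\cdot(\text{something})$ after comparison with $\mathrm{cap}_{\mathbb Z^d}(C)\asymp N^{d-2}$; the cross term is handled by Cauchy--Schwarz and absorbed into $\beta^2 c_6$ by adjusting constants (or, more cleanly, by the elementary inequality making the variance of the sum $\leq (1+\text{const})$ times the sum of variances). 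This is precisely what produces the denominator $\widetilde\alpha(K) + \beta^2 c_6(V,M)$, and one defines $\widetilde\alpha(K)$ accordingly so that $\lim_K \widetilde\alpha(K) = 1$ (it is $\alpha(K)$ plus a $K$-dependent slack absorbing the cross term, with the definition pointed to by \eqref{eq:AlphaTildeDef}).

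Finally I would apply the Borell--TIS inequality to $(-\widehat Z_f)_{f\in\mathcal F}$: writing $m = \mathbb{E}[\sup_f(-\widehat Z_f)] = -\mathbb{E}[\inf_f \widehat Z_f]$ and $\sigma^2 = \sup_f \mathrm{Var}(\widehat Z_f)$, one has $\pM[\inf_f \widehat Z_f \leq -t] = \pM[\sup_f(-\widehat Z_f) \geq t] \leq \exp(-(t-m)_+^2/(2\sigma^2))$ for $t = a + \beta\Delta$; plugging in the bounds on $m$ and $\sigma^2$ derived above, taking $\log$, multiplying through, and taking $\limsup_N \sup_{\kappa}$ gives exactly \eqref{eq:ModifiedExponentialControl}. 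The main obstacle — and the only genuinely new computation compared with \cite{S15} — is the estimate on $\mathrm{Var}(Y) = \frac{1}{|V_N|^2}\sum_{x,y\in V_N} g(x,y)$ and on the covariance between $Y$ and the $Z_f$'s: one needs $\frac{1}{|V_N|^2}\sum_{x,y\in V_N}g(x,y) \leq c_6(V,M)\, N^{-(d-2)}$ uniformly (using $g(x,y)\asymp |x-y|^{-(d-2)}$ and $\overline V\subseteq\mathring A$, so $V_N$ stays well inside, and $|V_N|\asymp N^d|V|$), together with $\mathrm{cap}_{\mathbb Z^d}(C) \geq c\,N^{d-2}$ from \eqref{eq:ComparisonDiscreteBrownianCap}, to write $\mathrm{Var}(Y) \leq c_6(V,M)/\mathrm{cap}_{\mathbb Z^d}(C)$ after adjusting $c_6$; the cross-term must then be shown to be lower order or absorbable, which I expect to require a careful but routine Cauchy--Schwarz estimate bounding $|\mathrm{Cov}(Z_f, Y)|$ by $\sqrt{\mathrm{Var}(Z_f)\mathrm{Var}(Y)}$ and then using $2\sqrt{uv}\leq \epsilon u + \epsilon^{-1}v$ with $\epsilon = \epsilon(K)\to 0$ to keep the $Z_f$-variance coefficient converging to $1$.
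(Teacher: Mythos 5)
Your overall architecture — build an enlarged centered Gaussian process $\widehat Z_f$ combining the $Z_f$ of \cite{S15} with the empirical average over $V_N$, prove that the event in question is contained in $\{\inf_f\widehat Z_f\leq -a-\beta\Delta\}$, then apply Borell--TIS — is exactly the paper's architecture, and your estimate on $\mathrm{Var}(Y)\sim N^{-(d-2)}$ and its conversion to $c_6(V,M)/\mathrm{cap}_{\mathbb Z^d}(C)$ is also correct. Two points are off, however, and the second one is fatal.

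First, a minor point on normalization: $\widehat Z_f=\frac{1}{1+\beta}(Z_f+\beta Y)$ does not give the stated inclusion. With $Z_f\leq -a$ at the optimizing $f$ and $Y\leq a-\Delta$ one obtains only $\widehat Z_f\leq\frac{-a(1-\beta)-\beta\Delta}{1+\beta}$, which is strictly larger than $-a-\beta\Delta$. The correct choice, used in the paper, is $\widehat Z_f=(1+\beta)Z_f+\beta Y=(1+\beta)Z_f-\beta\,\tfrac{1}{|V_N|}\sum_{V_N}\varphi$, which yields $\widehat Z_f\leq -a-\beta\Delta$ exactly. You hedged on this so I treat it as cosmetic, but note that it propagates: the variance of $(1+\beta)Z_f+\beta Y$ has leading term $(1+\beta)^2\mathrm{Var}(Z_f)$, not $\mathrm{Var}(Z_f)$.

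The genuine gap is the treatment of the covariance between $Z_f$ and $Y$. You propose Cauchy--Schwarz plus $2\sqrt{uv}\leq\epsilon u+\epsilon^{-1}v$ to ``absorb the cross term into $\beta^2 c_6$.'' This cannot work, because the cross term is not lower order: it is of the same order $1/\mathrm{cap}_{\mathbb Z^d}(C)$ as $\mathrm{Var}(Z_f)$ itself. Once you account for the correct normalization, $\mathrm{Var}(\widehat Z_f)=(1+\beta)^2\mathrm{Var}(Z_f)-2\beta(1+\beta)\,\mathcal G_N+\beta^2\mathrm{Var}(Y)$ where $\mathcal G_N=\mathrm{Cov}(Z_f,\tfrac{1}{|V_N|}\sum_{V_N}\varphi)>0$. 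If you discard or merely upper-bound $\mathcal G_N$ you are left with a leading coefficient $(1+\beta)^2\alpha_1(K)$ in front of $1/\mathrm{cap}_{\mathbb Z^d}(C)$, which tends to $(1+\beta)^2>1$ as $K\to\infty$ rather than to $1$; the lemma would fail. What the paper actually needs is a \emph{lower} bound on $\mathcal G_N$ that is asymptotically sharp, namely $\liminf_N\inf_{\kappa}\mathrm{cap}_{\mathbb Z^d}(C)\,\mathcal G_N\geq\alpha_2(K)$ with $\alpha_2(K)\to 1$. This is precisely where Proposition \ref{Prop41} enters: writing $\mathcal G_N\geq\frac{\widetilde\gamma(K,L_0)}{|V_N|\,\mathrm{cap}_{\mathbb Z^d}(C)}\sum_{x\in V_N}P_x[H_C<\infty]$ via \eqref{eq:EquilibriumPotential}, one needs $P_x[H_C<\infty]\to 1$ uniformly over $x\in V_N$ and $\kappa\in\mathcal K_N$, which requires the solidification estimate \eqref{eq:SolidificationProb} together with the Einmahl/KMT coupling. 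The negative covariance term then cancels the excess $2\beta+\beta^2$ coming from $(1+\beta)^2$, giving $\alpha_1(K)(1+\beta)^2-2\beta(1+\beta)\alpha_2(K)+\beta^2 c_6\leq\widetilde\alpha(K)+\beta^2 c_6$ with $\widetilde\alpha(K)=\alpha_1(K)+2\beta(\alpha_1(K)-\alpha_2(K))\to 1$. Your proposal omits this entirely, and no amount of Cauchy--Schwarz juggling can replace it: the sign and the exact asymptotic value of the covariance are both load-bearing.
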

\begin{proof}
For $\kappa \in \mathcal{K}_N$, set $\mathcal{F} = \lbrace f \in (\mathbb{Z}^d)^{\widetilde{\mathcal{C}}}; f(z) \in D_z \text{ for each } z \in \widetilde{\mathcal{C}} \rbrace$ and define
\begin{equation}
\begin{split}
Z_f & = \sum_{z \in \widetilde{\mathcal{C}}} \lambda(z) h_z(f(z)), \\
\widehat{Z}_f &= Z_f - \beta \bigg( \tfrac{1}{|V_N|}\sum_{x \in V_N} \varphi_x - Z_f \bigg) = (1+\beta)\sum_{z \in \widetilde{\mathcal{C}}} \lambda(z) h_z(f(z)) - \beta  \tfrac{1}{|V_N|}\sum_{x \in V_N} \varphi_x ,
\end{split}
\end{equation}
where 
\begin{equation}
\label{eq:DefLambda}
\lambda(z) = \overline{e}_C(B_z) = \frac{e_C(B_z)}{\text{cap}_{\mathbb{Z}^d}(C)}.
\end{equation}
Note that $\widehat{Z}_f$ is a zero-average Gaussian field and 
\begin{equation}
\text{var}(\widehat{Z}_f) = (1+\beta)^2 \text{var}(Z_f) - 2 \beta(1+\beta)\mathcal{G}_N + \beta^2 \mathcal{H}_N,
\end{equation}
where we defined 
\begin{equation}
\mathcal{G}_N = \mathbb{E}\bigg[Z_f \tfrac{1}{|V_N|} \sum_{x \in V_N} \varphi_x \bigg], \text{ and }
\mathcal{H}_N = \mathbb{E}\bigg[\bigg( \tfrac{1}{|V_N|}\sum_{x \in V_N} \varphi_x \bigg)^2 \bigg].
\end{equation}
From the proof of Theorem 4.2 of \cite{S15} (see (4.25) of this reference), we have
\begin{equation}
\label{eq:BoundZf1}
\text{var}(Z_f) \leq \frac{1}{\text{cap}_{\mathbb{Z}^d}(C)} \left( \frac{c}{K^{d-2}} + \gamma(K,L_0) \right),
\end{equation}
where 
\begin{equation}
\gamma(K,L_0) = \sup_{\substack{z,z' \in \mathbb{L}_0 \\ |z-z'|_\infty \geq (2K+1)L_0}} \sup_{\substack{y \in D_z, y' \in D_{z'} \\ x \in B_z, x' \in B_{z'} }}  \frac{g(y,y')}{g(x,x')} (\geq 1),
\end{equation}
and $\lim_K \limsup_N \gamma(K,L_0) = 1$ (see (4.21), (4.22) of \cite{S15}). For later use, we define 
\begin{align}
\label{eq:DefGammaTilde}
\widetilde{\gamma}(K,L_0) & = \inf_{\substack{z,z' \in \mathbb{L}_0 \\ |z-z'|_\infty \geq (2K+1)L_0}} \inf_{\substack{y \in D_z, y' \in D_{z'} \\ x \in B_z, x' \in B_{z'} }}  \frac{g(y,y')}{g(x,x')} (\leq 1), \\
\alpha_1(K) &  = \frac{c}{K^{d-2}} +  \limsup_N  \gamma(K,L_0), \\
\label{eq:Alpha2}
\alpha_2(K) & = \liminf_N \widetilde{\gamma}(K,L_0),
\end{align} 
and note that $\lim_K \alpha_1(K) = \lim_K \alpha_2(K) = 1$. 
To conclude the proof of the lemma, we have to investigate the asymptotics of $\mathcal{G}_N$ and $\mathcal{H}_N$ as $N \rightarrow \infty$ and if $K$ is large (notice that $\mathcal{G}_N$ depends implicitly on $K$ through $Z_f$). \\
\\
We begin with $\mathcal{G}_N$. For large $N$ (since $N/L_0 \rightarrow \infty$), every point in $V_N$ has a distance $\geq (2K+1)L_0$ from every box in $\widetilde{\mathcal{C}}$, so we obtain 
\begin{equation}
\begin{split}
\label{eq:GNdifference}
\mathcal{G}_N & = \frac{1}{|V_N|}\sum_{x \in V_N}\sum_{z \in \widetilde{\mathcal{C}}} g(x,f(z)) \lambda(z) \\
& \stackrel{\eqref{eq:DefLambda},\eqref{eq:DefGammaTilde}}{\geq}  \frac{\widetilde{\gamma}(K,L_0)}{|V_N|\text{cap}_{\mathbb{Z}^d}(C)} \sum_{x \in V_N} \sum_{x' \in C} g(x,x')e_C(x') \stackrel{\eqref{eq:EquilibriumPotential}}{=} \frac{\widetilde{\gamma}(K,L_0)}{|V_N|\text{cap}_{\mathbb{Z}^d}(C)} \sum_{x \in V_N} P_x[H_C < \infty].
\end{split}
\end{equation} 
Using Proposition \ref{Prop41}, we conclude from \eqref{eq:GNdifference} that
\begin{equation}
\label{eq:BoundZf2}
\liminf_{N } \inf_{\kappa \in \mathcal{K}_N} (\text{cap}_{\mathbb{Z}^d}(C) \mathcal{G}_N) \geq \liminf_N \widetilde{\gamma}(K,L_0) \stackrel{\eqref{eq:Alpha2}}{=} \alpha_2(K).
\end{equation}
We now turn to $\mathcal{H}_N$: We have
\begin{equation}
\begin{split}
\mathcal{H}_N & = \mathbb{E}\bigg[\bigg( \tfrac{1}{|V_N|} \sum_{x \in V_N} \varphi_x \bigg)^2 \bigg]  = \tfrac{1}{|V_N|^2} \sum_{x,y \in V_N} g(x,y) \\
& = \frac{1}{N^{2d}(1 + r_N)} \left\langle \Phi_N, g\Phi_N \right\rangle_{A_N},
\end{split}
\end{equation}
with some $r_N \rightarrow 0$, where we defined $\Phi = \frac{1}{|V|} \mathbbm{1}_V$, $\Phi_N = \Phi\left(\frac{\cdot}{N}\right)$ and 
\begin{equation}
\left\langle a, gb \right\rangle_{A_N} = \sum_{x,y \in A_N} a(x)g(x,y)b(y),
\end{equation}
for functions $a,b : \mathbb{Z}^d \rightarrow \mathbb{R}$ supported in $A_N$. The function $\Phi$ is Riemann-integrable, and it follows from Lemma 2.2 of \cite{BD93} (stated for continuous functions $\Phi$ in this reference, but its proof remains valid if $\Phi$ is Riemann-integrable) that 
\begin{equation}
\lim_{N \rightarrow \infty} \frac{1}{N^{d+2}} \left\langle \Phi_N, g\Phi_N \right\rangle_{A_N} = c \int \int_{A \times A} \frac{\Phi(x) \Phi(y)}{|x-y|^{d-2}} dx dy = W(V) \in (0,\infty), 
\end{equation}
which can be understood as the electrostatic energy of a unit charge uniformly distributed on $V$. 
We thus obtain the following asymptotic upper bound: 
\begin{equation}
\label{eq:BoundZf3}
\begin{split}
\limsup_N \sup_{\kappa \in \mathcal{K}_N} (\text{cap}_{\mathbb{Z}^d}(C) \mathcal{H}_N) & \leq \limsup_{N}\sup_{\kappa \in \mathcal{K}_N}\frac{\text{cap}_{\mathbb{Z}^d}(C)}{N^{d-2}(1 + r_N)} \frac{1}{N^{d+2}} \left\langle \Phi_N, g\Phi_N \right\rangle_{A_N} \\
& \leq c'M^{d-2} W(V) = c_6(V,M),
\end{split}
\end{equation}
where we used in the last inequality the monotonicty of capacity, $C \subseteq B(0,10(M+2)N)$ for large $N$ and \eqref{eq:BoxCapBound}. We set
\begin{equation}
\label{eq:AlphaTildeDef}
\widetilde{\alpha}(K) = \alpha_1(K) + 2\beta(\alpha_1(K) - \alpha_2(K)) \ (\rightarrow 1 \text{ as $K \rightarrow \infty$, see below \eqref{eq:Alpha2}}).
\end{equation}
Collecting the bounds in \eqref{eq:BoundZf1}, \eqref{eq:BoundZf2} and \eqref{eq:BoundZf3} we finally arrive at the following upper bound for the variance of $\widehat{Z}_f$: 
\begin{equation}
\begin{split}
\limsup_{N \rightarrow \infty}\sup_{\kappa \in \mathcal{K}_N} (\text{cap}_{\mathbb{Z}^d}(C)\text{var}(\widehat{Z}_f)) &\leq \alpha_1(K)(1+\beta)^2 - 2\beta(1+\beta)\alpha_2(K) + \beta^2 c_6(V,M) \\
& \leq \widetilde{\alpha}(K) + \beta^2c_6(V,M)
\end{split}
\end{equation}
for large enough $K$. We conclude as in the proof of Corollary 4.4 of \cite{S15}: To this end, define
\begin{equation}
\widehat{Z} = \inf_{f \in \mathcal{F}} \widehat{Z}_f,
\end{equation}
and note that $\mathbb{E}[\widehat{Z}] = \mathbb{E}[\inf_{f \in \mathcal{F}} Z_f]$, and again from Theorem 4.2 of \cite{S15}, one has an upper bound on this quantity, which reads:
\begin{equation}
\sup_{\kappa \in \mathcal{K}_N} |\mathbb{E}[\inf_{f \in \mathcal{F}} Z_f]| \left( \frac{|\widetilde{\mathcal{C}|}}{\text{cap}_{\mathbb{Z}^d}(C)}\right)^{-\frac{1}{2}} \leq \frac{c_5}{K}.
\end{equation}
Note that the event under the probability in \eqref{eq:ModifiedExponentialControl} is contained in $\lbrace \inf_{f \in \mathcal{F}} \widehat{Z}_f \leq -a - \beta \Delta \rbrace$. 
Using the Borell-TIS inequality (see Theorem 2.1.1, p. 50 of \cite{AT}), one obtains
\begin{equation}
\begin{split}
& \pM\bigg[ \bigcap_{z \in \widetilde{\mathcal{C}}} \lbrace \inf_{D_z} h_z \leq -a \rbrace \cap \bigg\lbrace \frac{1}{|V_N|} \sum_{x \in V_N} \varphi_x \geq -a + \Delta \bigg\rbrace  \bigg] \\
& \qquad \qquad \qquad\leq \pM\left[\inf_{f\in \mathcal{F}}\widehat{Z}_f \leq -a - \beta\Delta \right]  \leq \exp\left\lbrace - \frac{1}{2\sigma^2}\left(a + \beta \Delta - |\mathbb{E}[\widehat{Z}]| \right)_+^2 \right\rbrace,
\end{split}
\end{equation}
where $\sigma^2 = \sup_\mathcal{F} \text{var}(\widehat{Z}_f)$. The claim now follows as in the proof of Corollary 4.4 of \cite{S15} by taking logarithms and inserting the bounds on $\text{var}(\widehat{Z}_f)$ and $|\mathbb{E}[\widehat{Z}]|$. 
\end{proof}
We now have all the elements to state and prove the main result alluded to in the beginning of this section.
\begin{theorem}
\label{Theorem43}
Consider $\Delta > 0$, $\alpha < \overline{h}$ and a non-empty open subset $V$ of $\mathring{A}$ with $\overline{V} \subseteq \mathring{A}$. Then, for any $\beta > 0$ one has 
\begin{equation}
\label{eq:Theorem42claim}
\begin{split}
\limsup_N & \frac{1}{N^{d-2}}\log\pM\bigg[ \bigg\lbrace \frac{1}{|V_N|} \sum_{x \in V_N} \varphi_x \geq -(\overline{h}-\alpha) + \Delta \bigg\rbrace \cap \mathcal{D}^\alpha_N \bigg] \\ 
& \qquad\leq - \frac{1}{2d}\left( \overline{h} - \alpha + \beta \Delta\right)^2 \frac{\text{\normalfont{cap}}(\mathring{A})}{1+ \beta^2 c_6(V,M)} 
\end{split}
\end{equation}
\end{theorem}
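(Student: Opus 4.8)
The plan is to rerun the coarse-graining scheme from the proof of Theorem~\ref{Theorem31} essentially verbatim, replacing the application of \eqref{eq:ExponentialControl} by the modified exponential control \eqref{eq:ModifiedExponentialControl} of Lemma~\ref{CrucialLemma}. Write $\mathcal{A}_N = \{\tfrac{1}{|V_N|}\sum_{x\in V_N}\varphi_x\geq -(\overline{h}-\alpha)+\Delta\}$ for the average event. As in \eqref{eq:ConventionParameters}, fix $a,\delta,\gamma$ with $\alpha+a=\delta<\gamma<\overline{h}$, and set $\Delta_a=\Delta-\big((\overline{h}-\alpha)-a\big)$, so that $\mathcal{A}_N=\{\tfrac{1}{|V_N|}\sum_{x\in V_N}\varphi_x\geq -a+\Delta_a\}$ identically. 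Since $a<\overline{h}-\alpha$ one has $\Delta_a<\Delta$, while by choosing $\delta$ close enough to $\overline{h}$ (recall $\alpha<\overline h$, so $\overline h-\alpha>0$) we may assume $\Delta_a>0$; moreover $a\to\overline{h}-\alpha$ and $\Delta_a\to\Delta$ as $\delta\uparrow\overline{h}$.

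First I would note that intersecting with $\mathcal{A}_N$ does not interfere with the extraction of the coarse-grained datum $\kappa_N$ on the effective event $\widetilde{\mathcal{D}}^\alpha_N=\mathcal{D}^\alpha_N\setminus\mathcal{B}_N$, so the decomposition \eqref{eq:CoarseGraningScheme} gives $\widetilde{\mathcal{D}}^\alpha_N\cap\mathcal{A}_N=\bigcup_{\kappa\in\mathcal{K}_N}(\mathcal{D}_{N,\kappa}\cap\mathcal{A}_N)$. On $\mathcal{D}_{N,\kappa}$ all boxes $B_z$, $z\in\widetilde{\mathcal{C}}$, are $h$-bad at level $a$, hence
\[
\pM[\mathcal{D}_{N,\kappa}\cap\mathcal{A}_N]\leq \pM\Big[\bigcap_{z\in\widetilde{\mathcal{C}}}\{\textstyle\inf_{D_z}h_z\leq -a\}\cap\big\{\tfrac{1}{|V_N|}\textstyle\sum_{x\in V_N}\varphi_x\geq -a+\Delta_a\big\}\Big],
\]
and Lemma~\ref{CrucialLemma}, applied with $\Delta_a$ in place of $\Delta$, controls the right-hand side. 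Combining this with a union bound, the complexity estimate \eqref{eq:Complexity}, and the negligibility \eqref{eq:SuperExpCombined} of $\mathcal{B}_N$, one gets for large $K$
\[
\limsup_N\frac{1}{N^{d-2}}\log\pM[\mathcal{D}^\alpha_N\cap\mathcal{A}_N]\leq -\liminf_N\inf_{\kappa\in\mathcal{K}_N}\frac{1}{2N^{d-2}}\Big(a+\beta\Delta_a-\frac{c_5}{K}\sqrt{\tfrac{|\widetilde{\mathcal{C}}|}{\text{cap}_{\mathbb{Z}^d}(C)}}\Big)_+^2\frac{\text{cap}_{\mathbb{Z}^d}(C)}{\widetilde{\alpha}(K)+\beta^2 c_6(V,M)}.
\]
From this point the argument coincides with the closing part of the proof of Theorem~\ref{Theorem31}: by \eqref{eq:AdditionalTermVanishes} one has $\sup_{\kappa\in\mathcal{K}_N}|\widetilde{\mathcal{C}}|/\text{cap}_{\mathbb{Z}^d}(C)\to 0$, so the $(\,\cdot\,)_+^2$ factor tends to $(a+\beta\Delta_a)^2>0$; the capacity comparison \eqref{eq:ComparisonDiscreteBrownianCap} gives $\varliminf_K\varliminf_N\inf_{\kappa}N^{-(d-2)}\text{cap}_{\mathbb{Z}^d}(C)\geq\tfrac1d\text{cap}(A')$ for any compact $A'\subseteq\mathring{A}$; and $\widetilde{\alpha}(K)\to 1$. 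Taking $\limsup_K$, then $A'\uparrow\mathring{A}$, then $\delta\uparrow\overline{h}$ along rationals (with $\gamma$ kept in $(\delta,\overline{h})$) so that $a+\beta\Delta_a\to(\overline{h}-\alpha)+\beta\Delta$ and the denominator tends to $1+\beta^2 c_6(V,M)$, yields exactly \eqref{eq:Theorem42claim}.

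The main point is that all the substantial work is already in place: the coarse-graining and the uniform capacity lower bound are inherited unchanged from Theorem~\ref{Theorem31}, while the interaction between the $h$-badness of the blocking boxes and the upward deviation of the average $\tfrac{1}{|V_N|}\sum_{x\in V_N}\varphi_x$ has been encapsulated in Lemma~\ref{CrucialLemma}, whose proof is where the genuinely new inputs, Proposition~\ref{Prop41} and the variance estimate combined with the Borell--TIS inequality, are used. Accordingly, the only new bookkeeping in Theorem~\ref{Theorem43} is the reduction of the threshold $-(\overline{h}-\alpha)+\Delta$ to $-a+\Delta_a$ together with the verification that $\Delta_a>0$ can be arranged while still having $\Delta_a\to\Delta$ and $a\to\overline{h}-\alpha$ as $\delta\uparrow\overline{h}$; I expect this, along with keeping straight the nested order of limits ($N\to\infty$, then $K\to\infty$, then $A'\uparrow\mathring{A}$, then $\delta\uparrow\overline{h}$) and the fact that $c_6(V,M)$ is independent of $K$, to be the only mildly delicate aspect, all deeper difficulties having been absorbed into the lemmas established above.
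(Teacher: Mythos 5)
Your proof is correct and follows essentially the same route as the paper's: you rewrite the threshold $-(\overline{h}-\alpha)+\Delta$ as $-a+\Delta_a$ (your $\Delta_a$ is exactly the paper's $\widetilde{\Delta}=\Delta-(\overline{h}-\delta)$), run the coarse-graining from Theorem~\ref{Theorem31} with Lemma~\ref{CrucialLemma} replacing \eqref{eq:ExponentialControl}, and conclude via the same limit sequence $N\to\infty$, $K\to\infty$, $A'\uparrow\mathring A$, $\delta\uparrow\overline h$.
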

\begin{proof}
We denote by $\mathcal{J}_N$ the event under the probability in \eqref{eq:Theorem42claim}. Choose $\alpha + a = \delta < \gamma < \overline{h}$ so that $\widetilde{\Delta} = \Delta - (\overline{h} - \delta) > 0$. 
By using the coarse-graining of the event $\widetilde{\mathcal{D}}^\alpha_N$ as in \eqref{eq:CoarseGraningScheme}, one obtains by Lemma \ref{CrucialLemma} that
\begin{equation}
\begin{split}
\varlimsup_N \frac{1}{N^{d-2}} \log \pM[\mathcal{J}_N] & \leq \varlimsup_N \frac{1}{N^{d-2}} \sup_{\kappa \in \mathcal{K}_N} \log \pM\bigg[\bigcap_{x \in \widetilde{\mathcal{C}}} \lbrace \inf_{D_z} h_z \leq -a \rbrace \cap \bigg\lbrace \frac{1}{|V_N|} \sum_{x \in V_N} \varphi_x \geq -a + \widetilde{\Delta} \bigg\rbrace \bigg] \\& \leq - \varliminf_N \frac{1}{N^{d-2}} \inf_{\kappa \in \mathcal{K}_N} \frac{1}{2}\left( a + \beta \widetilde{\Delta} - \frac{c_5}{K}\sqrt{\frac{|\widetilde{\mathcal{C}}|}{\text{\normalfont{cap}}_{\mathbb{Z}^d}(C)}}\right)^2_+ \frac{\text{\normalfont{cap}}_{\mathbb{Z}^d}(C)}{\widetilde{\alpha}(K)+ \beta^2 c_6(V,M)}. 
\end{split}
\end{equation}
Now, one can follow the same steps as in the proof of Theorem \ref{Theorem31} after \eqref{eq:ProofStep1} to obtain the right-hand side of \eqref{eq:Theorem42claim}: Indeed, using \eqref{eq:AdditionalTermVanishes}, we find
\begin{equation}
\varlimsup_N \frac{1}{N^{d-2}} \log \pM[\mathcal{J}_N] \leq  - \varliminf_N \frac{1}{N^{d-2}} \inf_{\kappa \in \mathcal{K}_N} \frac{1}{2}\left( a + \beta \widetilde{\Delta}\right)^2 \frac{\text{\normalfont{cap}}_{\mathbb{Z}^d}(C)}{\widetilde{\alpha}(K)+ \beta^2 c_6(V,M)}
\end{equation}
and by taking $\limsup_K$ on both sides and using \eqref{eq:ComparisonDiscreteBrownianCap} (recall that $\lim_{K \rightarrow \infty}\widetilde{\alpha}(K) = 1$), we find that for any compact subset $A' \subseteq \mathring{A}$: 
\begin{equation}
\limsup_N \frac{1}{N^{d-2}} \log \pM[\mathcal{J}_N] \leq - \frac{1}{2d}\left(a + \beta \widetilde{\Delta}\right)^2 \frac{\text{cap}(A')}{1 + \beta^2 c_6(V,M)}.
\end{equation}
The claim now follows by letting $a$ tend to $\overline{h} - \alpha$ and taking $A' \uparrow \mathring{A}$.
\end{proof}
We conclude this section with a corollary that combines the asymptotic upper bound from Theorem \ref{Theorem43} with the asymptotic lower bound from Theorem \ref{Theorem21} under the assumption that the critical parameters $\overline{h},h_{\ast}$ and $h_{\ast\ast}$ coincide.
\begin{corollary}
\label{LastCor}
Consider $\Delta$, $\alpha$ and $V$ as in Theorem \ref{Theorem43} and assume that $A$ is regular in the sense that $\text{\normalfont{cap}}(A) = \text{\normalfont{cap}}(\mathring{A})$. Then, under the assuption that $\overline{h} = h_\ast = h_{\ast\ast}$, one has
\begin{equation}
\lim_{N \rightarrow \infty} \pM\left[  \frac{1}{|V_N|} \sum_{x \in V_N} \varphi_x \geq -(h_\ast-\alpha) + \Delta  \bigg\vert \mathcal{D}^\alpha_N \right] = 0.
\end{equation}
\end{corollary}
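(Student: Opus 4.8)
The plan is to combine the large deviation lower bound of Theorem~\ref{Theorem21} with the refined upper bound of Theorem~\ref{Theorem43} in a way that exploits the assumed coincidence of the three critical levels. Write $\mathcal{J}_N$ for the event appearing in Theorem~\ref{Theorem43}, i.e.\ $\mathcal{J}_N = \{ \frac{1}{|V_N|}\sum_{x\in V_N}\varphi_x \geq -(h_\ast-\alpha)+\Delta \} \cap \mathcal{D}^\alpha_N$ (using $\overline h = h_\ast$). The conditional probability in the corollary is exactly $\pM[\mathcal{J}_N]/\pM[\mathcal{D}^\alpha_N]$, so it suffices to show that the exponential rate of $\pM[\mathcal{J}_N]$ is strictly more negative than that of $\pM[\mathcal{D}^\alpha_N]$, and then the ratio tends to $0$.

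First I would record the denominator bound: under $\overline h = h_\ast = h_{\ast\ast}$ and $\text{cap}(A)=\text{cap}(\mathring A)$, Theorems~\ref{Theorem21} and~\ref{Theorem31} together give
\begin{equation}
\lim_N \frac{1}{N^{d-2}}\log\pM[\mathcal{D}^\alpha_N] = -\frac{1}{2d}(h_\ast-\alpha)^2\,\text{cap}(A).
\end{equation}
Next I would invoke Theorem~\ref{Theorem43} with $\overline h$ replaced by $h_\ast$: for every $\beta>0$,
\begin{equation}
\limsup_N \frac{1}{N^{d-2}}\log\pM[\mathcal{J}_N] \leq -\frac{1}{2d}\,(h_\ast-\alpha+\beta\Delta)^2\,\frac{\text{cap}(\mathring A)}{1+\beta^2 c_6(V,M)}.
\end{equation}
The key elementary observation is that the function $\beta \mapsto (h_\ast-\alpha+\beta\Delta)^2/(1+\beta^2 c_6(V,M))$ has derivative at $\beta=0$ equal to $2\Delta(h_\ast-\alpha)>0$ (since $\Delta>0$ and $\alpha<h_\ast$), so the ratio strictly exceeds its value $(h_\ast-\alpha)^2$ at $\beta=0$ for all sufficiently small $\beta>0$. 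Fixing such a $\beta$, and using $\text{cap}(\mathring A)=\text{cap}(A)$, the upper rate for $\pM[\mathcal{J}_N]$ is strictly below $-\frac{1}{2d}(h_\ast-\alpha)^2\text{cap}(A)$.

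Finally I would assemble the pieces: writing the conditional probability as $\pM[\mathcal{J}_N]/\pM[\mathcal{D}^\alpha_N]$ and taking $\frac{1}{N^{d-2}}\log(\cdot)$, one gets
\begin{equation}
\limsup_N \frac{1}{N^{d-2}}\log\frac{\pM[\mathcal{J}_N]}{\pM[\mathcal{D}^\alpha_N]} \leq -\frac{1}{2d}\Big( (h_\ast-\alpha+\beta\Delta)^2\frac{1}{1+\beta^2 c_6(V,M)} - (h_\ast-\alpha)^2\Big)\text{cap}(A) < 0,
\end{equation}
which forces the conditional probability itself to go to $0$ as $N\to\infty$; since it is nonnegative, the limit is exactly $0$. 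The only genuine content is the calculus fact that the quadratic-over-quadratic expression is strictly increasing near $\beta=0$, which I expect to be the main (and essentially only) obstacle — and it is mild. One should take slight care that Theorem~\ref{Theorem43} is stated with $\overline h$ rather than $h_\ast$, but under the standing assumption these coincide, so the substitution is immediate; similarly the regularity hypothesis $\text{cap}(A)=\text{cap}(\mathring A)$ is exactly what lets the numerator and denominator capacities be identified.
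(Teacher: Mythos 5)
Your proof is correct and follows essentially the same route as the paper: write the conditional probability as $\pM[\mathcal{J}_N]/\pM[\mathcal{D}^\alpha_N]$, use Theorem~\ref{Theorem43} for an upper rate on the numerator and Theorem~\ref{Theorem21} for a lower rate on the denominator, and observe that for sufficiently small $\beta>0$ the numerator's rate strictly beats the denominator's. The paper only asserts the existence of a suitable small $\beta$ without the derivative-at-zero computation you include, and it only invokes the lower bound of Theorem~\ref{Theorem21} for the denominator rather than the exact rate, but these are cosmetic differences.
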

\begin{proof}
For every $\Delta > 0$, one can find $\beta > 0$ small such that
\begin{equation}
\frac{(h_\ast - \alpha + \beta\Delta)^2}{1 + \beta^2 c_6(V,M)} > (h_\ast - \alpha)^2.
\end{equation}
With the notation from the proof of Theorem \ref{Theorem43}, we obtain: 
\begin{equation}
\limsup_N \frac{1}{N^{d-2}} \log \frac{\pM[\mathcal{J}_N]}{\pM[\mathcal{D}^\alpha_N]} \leq \limsup_N \frac{1}{N^{d-2}} \log \pM[\mathcal{J}_N] - \liminf_N \frac{1}{N^{d-2}} \log \pM[\mathcal{D}^\alpha_N] < 0,
\end{equation}
using in the last step Theorem \ref{Theorem43}, Theorem \ref{Theorem21}, the regularity condition on $A$ and the assumption that $\overline{h} = h_\ast = h_{\ast\ast}$.
\end{proof}

\textbf{ } \\
 \textbf{Acknowledgement. }The author wishes to thank A.-S. Sznitman for suggesting the problem, for helpful discussions and comments on earlier drafts of this article, as well as A. Chiarini for useful discussions.
 \selectlanguage{english}

 \end{document}